\newtheorem{dfn}{Definition}[section]
\newtheorem{thm}[dfn]{Theorem}
\newtheorem{lemma}[dfn]{Lemma}
\newtheorem{prop}[dfn]{Proposition}
\newtheorem{cor}[dfn]{Corollary}
\newtheorem{rmk}[dfn]{Remark}
\theoremstyle{definition}
\theoremstyle{definition}
\newtheorem{exmp}[dfn]{Example}
\newcommand{\notmid}{\mathrel{\ooalign{$\mkern-5mu\not$\crcr$|$}}}
\title{Motivic Modularity of CM K3 Surfaces}
\author{Rikuto Ito}
\address{Graduate School of Mathematics, Nagoya University, Furo-cho, Chikusa-ku, Nagoya, 464-8601, Japan}
\email{ito.rikuto.g7@s.mail.nagoya-u.ac.jp}
\keywords{K3 surface; complex multiplication; Galois representation; Hecke character; abelian variety}
\subjclass[2020]{11F80, 11G15, 11G35, 14G10, 14J28}
\begin{document}
\begin{abstract}
The main result of this paper is the explicit construction of  algebraic Hecke characters of K3 surfaces defined over a number field $k\subset \mathbb{C}$ with complex multiplication. In particular, we give the explicit determination of the algebraic Hecke characters of Kummer surfaces associated to simple abelian surfaces with complex multiplication. 
\end{abstract}
\maketitle
\tableofcontents

\section{Introduction}
 Let $A$ be an abelian variety of dimension $n$ with CM (complex multiplication) defined over a number field $k \subset \mathbb{C}$. Shimura and Taniyama \cite{shimura} showed that  there exist algebraic Hecke characters $\chi_{1}, \cdots, \chi_{2n}$ over $k$ such that 
\begin{align}
    L(s, A)=\prod^{2n}_{i=1}L(s, \chi_{i}), \tag{1.1}
\end{align}
where $L(s, A)$ is the Hasse-Weil $L$-function of $A$ and $L(s, \chi_{i})$ are the Hecke $L$-functions of $\chi_{i}$. This statement is a consequence of \textbf{the main theorem of complex multiplication} for  abelian varieties. 
\\$\indent$ Piatetski-Shapiro and Shafarevich \cite{ps} proved an analogue of (1.1) for \textbf{K3 surfaces with complex multiplication (CM K3 surfaces,  in short)}  using their Kuga-Satake constructions and Shimura-Taniyama's result. 
\\$\indent$ In some arithmetic problems, e.g., the calculation of Brauer groups, it is important to know the explicit description of the algebraic Hecke characters. The explicit descriptions of  Hecke characters associated to CM K3 surfaces have been obtained in the following cases:
\begin{itemize}
        
    \item{\textbf{K3 surfaces with non-symplectic group actions.} Livn\'e, Sch\"utt, and Yui \cite{yui} discussed the fields of definition and the Hecke characters of K3 surfaces with non-symplectic group actions. In such a class, K3 surfaces with the minimal Picard rank are of CM type (Example \ref{k3ex} (2)). They showed that the fields of definition of these K3 surfaces are cyclotomic fields over $\mathbb{Q}$ and the associated algebraic Hecke characters are given by \textbf{Jacobi sums}. }
    \item{\textbf{Kummer surfaces associated to the self-product of a CM elliptic curve.}} Deuring \cite{deu} constructed the algebraic Hecke characters for CM elliptic curves defined over number fields, and proved the equality (1.1) for $n=1$. Shioda and Inose \cite{si} showed that the Hecke characters of the Kummer surface associated to the self-product of a CM elliptic curve are given by the square of the Hecke character of the CM elliptic curve. 
    \item{\textbf{Kummer surfaces associated to the product of two CM elliptic curves.}} Rizov \cite{Rizov10} proved the main theorem of complex multiplication for the products of two elliptic curves. Using  the Kummer construction, one can compute the algebraic Hecke characters of the associated Kummer surfaces. 
    \item{\textbf{Certain K3 surfaces with Picard rank 16.}}
    Costa, Elsenhans, Jahnel, and  Voight \cite{costa} introduced  three explicit K3 surfaces  of Picard rank 16, realized  as double planes.  They conjectured that these surfaces have CM by CM fields  obtained by adjoining $\sqrt{-1}$ to certain totally real cubic fields.  Under the assumption  that the surfaces are of \textbf{CM type}, they proved that the 6-dimensional Galois representations associated with their transcendental parts are isomorphic to representations induced from algebraic Hecke characters over the corresponding CM fields. In particular, after restriction to these CM fields, the representations  decompose as  direct sums of algebraic Hecke characters and their conjugates. 
\end{itemize}
\begin{rmk}Note that K3 surfaces in the above  cases do not have the Picard rank 8 or 14 as shown in \cite[Theorem 1 and 2]{yui} and \cite[Corollary 2.3, Proposition 2.4, and Corollary 2.6]{hulek}. But Taelman \cite{tae} proved that  for any CM field $E$ over $\mathbb{Q}$ of degree at most 20, there exists a K3 surface over $\mathbb{C}$ with complex multiplication by $E$. The existence of CM K3 surfaces with Picard rank 8 or 14 follows from \cite{tae}. 
\end{rmk}
\noindent \indent The contribution of this paper is to provide a formula (Theorem \ref{main theorem}) for the algebraic Hecke characters of  CM K3 surfaces over number fields, including those with Picard rank 8 and 14. Moreover, this formula includes the following new examples:
\begin{itemize}
    \item {\textbf{Kummer surfaces associated to a simple CM abelian surface (Proposition \ref{onegai}).}} This result is independent of Shioda-Inose \cite{si} and Rizov \cite{Rizov10}. By applying the main theorem of complex multiplication for simple abelian surfaces, one can determine the Hecke characters of the associated Kummer surfaces by a method analogous to those of Shioda-Inose and Rizov, and show that they coincide with those given in Theorem \ref{main theorem}.
\end{itemize}
\begin{rmk}
    Although Theorem \ref{main theorem} does not prove that the three K3 surfaces in \cite{costa} are of CM type, it shows that, under this assumption, their transcendental Galois representations are governed by algebraic Hecke characters in the sense of this paper. In this way, their result \cite[Theorem 1.2(a)]{costa}are recovered by our general construction once the CM type assumption is imposed. 
\end{rmk}
\subsection{Main theorem}
$\indent$ Let $k$ be a number field, and let $\mathbb{A}^{\times}_{k}$ denote the idele group of $k$. Fix a rational prime $l$ and an isomorphism of fields $\overline{\mathbb{Q}_{l}}\cong \mathbb{C}$. By class field theory, there exists a one-to-one correspondence (see Theorem $\ref{1gal}$):
\begin{align}
\{ 1\textendash{\rm dimensional ~locally ~algebraic~ representation}\}&\leftrightarrow \{ {\rm algebraic~Hecke~character}\}\notag
\\(\rho_{l, \chi} : {\rm Gal}(\overline{k}/k)\to\overline{\mathbb{Q}_{l}}^{\times})&\leftrightarrow  (\chi : \mathbb{A}^{\times}_{k}\to\mathbb{C}^{\times}).    \notag
\end{align}
An $n$-dimensional semisimple $l$-adic abelian  representation $\rho_{l}$ is said to be {\bf modular} if there exist algebraic Hecke characters $\chi_{1}, \cdots, \chi_{n}$ such that $\rho_{l}\cong \rho_{l, \chi_{1}}\oplus\cdots\oplus \rho_{l, \chi_{n}}$.
\\$\indent$ Let $k$ be a number field and let $\overline{k}$ be an algebraic closure of $k$. Let $X$ be a K3 surface over $k$, i.e.,  a smooth projective surface over $k$ such that $H^{1}(X, \mathscr{O}_{X})=0$ and $\Omega_{X/k}^{2}\cong \mathscr{O}_{X}$. Since the Galois group ${\rm Gal}(\overline{k}/k)$ acts on $X_{\overline{k}}:=X\times_{{\rm Spec}(k)}{\rm Spec}(\overline{k})$, we obtain the 22-dimensional $l$-adic Galois representation 
\begin{align}
\rho_{l} : {\rm Gal}(\overline{k}/k)\to GL(H^{2}(X_{\overline{k}}, \mathbb{Q}_{l}(1))). \tag{1.2}
\end{align}
Fix an embedding $\overline{k}\hookrightarrow \mathbb{C}$. Then there are isomorphisms
\begin{align}
    H^{2}(X_{\overline{k}}, \mathbb{Q}_{l}(1))\cong H^{2}(X_{\mathbb{C}}, \mathbb{Q}(1))\otimes \mathbb{Q}_{l}\cong ({\rm NS}(X_{\mathbb{C}})\otimes\mathbb{Q}_{l})\oplus (T(X_{\mathbb{C}})\otimes \mathbb{Q}_{l}), \notag 
\end{align}where $T(X_{\mathbb{C}}) := {\rm NS}(X_{\mathbb{C}})^{\bot} \subset H^{2}(X_{\mathbb{C}}, \mathbb{Z}(1))$. Since ${\rm NS}(X_{\mathbb{C}})\cong {\rm NS}(X_{\overline{k}})$, the group ${\rm Gal}(\overline{k}/k)$ acts on ${\rm NS}(X_{\mathbb{C}})\otimes \mathbb{Q}_{l}$. Hence we obtain a $(22-\rho(X_{\overline{k}}))$-dimensional Galois representation 
\begin{align}
    {\rm Gal}(\overline{k}/k) \to GL(T_{l}(X_{\overline{k}})\otimes \mathbb{Q}_{l}), \tag{1.3} 
\end{align}where $T_{l}(X_{\overline{k}}) := T(X_{\mathbb{C}})\otimes\mathbb{Z}_{l}$. The  $\mathbb{Z}_{l}$-module $T_{l}(X_{\overline{k}})$ is called the \textbf{$l$-adic transcendental lattice} of $X$.
 \\\\ $\indent$ We obtain the following new result.
    \begin{thm}\label{main theorem} 
        Let $X$ be a K3 surface defined over a number field $k\subset \mathbb{C}$. Suppose that $X$ has CM over $k$ by a  CM field $E$, i.e., $X_{\mathbb{C}}$ is of CM type and $k\supset E$. Then there exists a unique locally constant homomorphism $u : \mathbb{A}^{\times}_{k} \to E^{\times}\subset \mathbb{A}^{\times}_{E}$ satisfying the following properties (i)-(iii):
        \\ \indent (i) Fix an embedding $\tau : E\to \mathbb{C}$. The function \begin{align}
            \chi_{\tau} : \mathbb{A}^{\times}_{k}\xrightarrow{u(-)\cdot \overline{{\rm Nm}_{k/E}(-)}/{\rm Nm}_{k/E}(-)} \mathbb{A}^{\times}_{E}\xrightarrow{\tau-{\rm projection}} \mathbb{C}^{\times}\notag\end{align}
         is an algebraic Hecke character, where ${\rm Nm}_{k/E} : \mathbb{A}^{\times}_{k}\to \mathbb{A}^{\times}_{E}$ denotes the norm map for the extension $k/E$, and $\overline{(-)} : \mathbb{A}^{\times}_{E}\to \mathbb{A}^{\times}_{E}$ denotes the complex conjugation.
        \\\indent (ii) Let \begin{align}\rho_{l} : G_{k}\to GL(T_{l}(X_{\overline{k}})\otimes \mathbb{Q}_{l}) \notag
        \end{align}
    be the $(22-\rho(X_{\overline{k}}))$-dimensional $l$-adic representation. Then $\rho_{l}$ is diagonalized by the algebraic Hecke characters $\chi_{\tau}$, where $\tau$ runs over ${\rm Hom}(E, \mathbb{C})$.
    \\ \indent (iii) Fix an embedding $\tau\in {\rm Hom}(E, \mathbb{C})$ and a prime ideal $\mathfrak{p}\subset k$ with $(l, \mathfrak{p})=1$. Then the following conditions are equivalent: 
     \\(a) The $l$-adic representation $\rho_{l}$ is unramified at $\mathfrak{p}$, i.e., if $I_{\mathfrak{p}}$ denotes the inertia group of $\mathfrak{p}$, then $\rho_{l}(I_{\mathfrak{p}})=\{id\}$.
     \\(b) The Hecke character $\chi_{\tau}$ is unramified at $\mathfrak{p}$, i.e., if $\mathscr{O}_{\mathfrak{p}}$ is the ring of integers of $k_{\mathfrak{p}}$, then  $\chi_{\tau}(\mathscr{O}^{\times}_{\mathfrak{p}})=1.$
       \end{thm}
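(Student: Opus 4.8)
The plan is to reduce the statement to the linear algebra of the transcendental Hodge structure together with the Shimura--Taniyama description of the Kuga--Satake abelian variety, and then to read off $u$ by comparing infinity types. First I would analyze $V := T(X_{\mathbb{C}})\otimes\mathbb{Q}$. Since $X_{\mathbb{C}}$ is of CM type, $V$ is a simple rational Hodge structure of K3 type whose Mumford--Tate group is a torus, so $E=\mathrm{End}_{\mathrm{HS}}(V)$ is a CM field, $V$ is a one-dimensional $E$-vector space, and $[E:\mathbb{Q}]=22-\rho(X_{\overline{k}})$. Complexifying, $V\otimes_{\mathbb{Q}}\mathbb{C}=\bigoplus_{\tau\in\mathrm{Hom}(E,\mathbb{C})}V_{\tau}$ is a sum of lines, and the Hodge filtration singles out a distinguished embedding $\tau_{0}$ with $V_{\tau_{0}}=V^{2,0}$, $V_{\overline{\tau_{0}}}=V^{0,2}$, and $V_{\tau}\subset V^{1,1}$ otherwise; this is the CM type of $X$. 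Because $X$ has CM over $k$ with $E\subset k$, the $E$-action is $G_{k}$-equivariant, so $G_{k}$ acts $E\otimes\mathbb{Q}_{l}$-linearly on $T_{l}(X_{\overline{k}})\otimes\mathbb{Q}_{l}$, which is free of rank one over $E\otimes\mathbb{Q}_{l}$. Hence $\rho_{l}$ is given by a single character $\psi_{l}:G_{k}\to(E\otimes\mathbb{Q}_{l})^{\times}$, and under the fixed isomorphism $\overline{\mathbb{Q}_{l}}\cong\mathbb{C}$ we obtain $\rho_{l}\cong\bigoplus_{\tau}(\tau\circ\psi_{l})$. This decomposition will be the diagonalization asserted in (ii), once its summands are matched with the $\chi_{\tau}$.

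Next I would prove that $\psi_{l}$ is locally algebraic, so that the correspondence of Theorem \ref{1gal} (with coefficients in $E$ rather than $\overline{\mathbb{Q}_{l}}$) applies. Here I would invoke the Kuga--Satake construction that underlies the Piatetski-Shapiro and Shafarevich theorem: after a finite extension $k'/k$ there is an abelian variety $A$ with CM whose $l$-adic representation is modular by Shimura--Taniyama, and $T_{l}\otimes\mathbb{Q}_{l}$ embeds $G_{k'}$-equivariantly into a tensor/Clifford construction on $H^{1}(A_{\overline{k}},\mathbb{Q}_{l})$. Local algebraicity is inherited by $\psi_{l}$ over $k'$, and since it is insensitive to finite extension it holds over $k$. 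By the correspondence this produces, for each $\tau$, an algebraic Hecke character $\chi_{\tau}$ with $\tau\circ\psi_{l}\cong\rho_{l,\chi_{\tau}}$, proving (i) and identifying the summands in (ii). Packaging the $\tau\circ\psi_{l}$ together, the $E\otimes\mathbb{Q}_{l}$-valued $\psi_{l}$ corresponds to a single $\mathbb{A}_{E}^{\times}$-valued algebraic Hecke character $\Psi:\mathbb{A}_{k}^{\times}\to\mathbb{A}_{E}^{\times}$ whose $\tau$-component, the projection to the archimedean place of $E$ determined by $\tau$, is $\chi_{\tau}$.

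The heart of the proof is to exhibit $u$. I would compute the infinity type of $\Psi$ from the CM type found above: the Tate twist $H^{2}(1)$ carries Hodge--Tate weight $+1$ on the period line, $-1$ on its conjugate, and $0$ elsewhere, and a reflex computation (the Shimura--Taniyama reciprocity specialised to the case $E\subset k$) shows that the infinity type of $\chi_{\tau}$ equals $\prod_{\sigma|_{E}=\overline{\tau}}\sigma\big/\prod_{\sigma|_{E}=\tau}\sigma$, which is precisely the infinity type of $x\mapsto\tau\big(\overline{\mathrm{Nm}_{k/E}(x)}/\mathrm{Nm}_{k/E}(x)\big)$. Setting $u:=\Psi\cdot\big(\mathrm{Nm}_{k/E}/\overline{\mathrm{Nm}_{k/E}}\big)$, the infinity types cancel and $u|_{k^{\times}}=1$; thus $u$ is a finite-order $\mathbb{A}_{E}^{\times}$-valued character, necessarily locally constant with values in $E^{\times}$, and by construction $\chi_{\tau}$ is the $\tau$-projection of $u\cdot\overline{\mathrm{Nm}_{k/E}}/\mathrm{Nm}_{k/E}$. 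Uniqueness is then immediate: if $u,u'$ both satisfy (i)--(ii), then for every $\tau$ the quotient satisfies $\tau(u/u')=\chi_{\tau}/\chi_{\tau}=1$, and since $\bigcap_{\tau}\ker(\tau)=\{1\}$ in $E^{\times}$ we get $u=u'$.

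Finally, for (iii), write $\Theta(x)=u(x)\cdot\overline{\mathrm{Nm}_{k/E}(x)}/\mathrm{Nm}_{k/E}(x)$, so $\chi_{\tau}$ is the $\tau$-projection of $\Theta$. On the local units $\mathscr{O}_{\mathfrak{p}}^{\times}$ at a finite prime $\mathfrak{p}$, the factor $\overline{\mathrm{Nm}_{k/E}}/\mathrm{Nm}_{k/E}$ contributes trivially to the archimedean component selected by $\tau$, so $\chi_{\tau}(\mathscr{O}_{\mathfrak{p}}^{\times})=\tau\big(u(\mathscr{O}_{\mathfrak{p}}^{\times})\big)$; since $\tau$ is injective on $E^{\times}$, the condition $\chi_{\tau}(\mathscr{O}_{\mathfrak{p}}^{\times})=1$ is equivalent to $u(\mathscr{O}_{\mathfrak{p}}^{\times})=1$ and is independent of $\tau$. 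Because $\rho_{l}\cong\bigoplus_{\tau}\chi_{\tau}$, the representation is unramified at $\mathfrak{p}$ iff every $\chi_{\tau}$ is, which by the previous sentence is equivalent to unramifiedness of the fixed $\chi_{\tau}$. The step I expect to be the main obstacle is the infinity-type computation together with the transfer of $E$-linearity and local algebraicity across the Kuga--Satake correspondence: one must descend from $k'$ to $k$ and verify that the reflex norm attached to the K3 CM type collapses, under $E\subset k$, exactly to $\overline{\mathrm{Nm}_{k/E}}/\mathrm{Nm}_{k/E}$, and that the leftover character genuinely takes values in the global group $E^{\times}$ rather than merely in $\mathbb{A}_{E}^{\times}$.
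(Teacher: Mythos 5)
Your proposal takes a genuinely different route from the paper (which builds everything on Rizov's main theorem of CM, Theorem \ref{risov}, quoted from \cite{mp}), but as written it has two gaps that sit exactly at the places where the paper invokes its key external inputs. First, you assert that since $X$ has CM over $k$ with $E\subset k$, the $E$-action on $T(X_{\mathbb{C}})_{\mathbb{Q}}$ is $G_{k}$-equivariant, so that $\rho_{l}$ is a character $\psi_{l}:G_{k}\to(E\otimes\mathbb{Q}_{l})^{\times}$. This is not automatic from Definition \ref{def field}: having CM over $k$ only demands that $X_{\mathbb{C}}$ be of CM type and that $k$ contain the reflex field $\sigma_{X_{\mathbb{C}}}(E_{X_{\mathbb{C}}})$; it says nothing about the Galois action commuting with the Hodge endomorphisms, which are transcendental classes (algebraic only after Mukai--Nikulin--Buskin, and then defined a priori only over some finite extension). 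The statement that $\rho^{ab}$ lands in $G(\mathbb{A}_{f})\subset(E\otimes\mathbb{A}_{f})^{\times}$ is part of the \emph{content} of Theorem \ref{risov}; recovering it from the Kuga--Satake construction, as you propose, amounts to re-proving Rizov's theorem, and your sketch does not carry this out.

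Second, and you flag this yourself as the main obstacle, your construction of $u$ does not reach the statement of Theorem \ref{main theorem}. The ``packaging'' of the complex-valued $\chi_{\tau}$ into $\Psi:\mathbb{A}_{k}^{\times}\to\mathbb{A}_{E}^{\times}$ is not well defined (archimedean projections do not determine an $\mathbb{A}_{E}^{\times}$-valued map), and even granting a Weil-type $E$-rational packaging, the cancellation of infinity types only shows that $u:=\Psi\cdot\bigl(\mathrm{Nm}_{k/E}/\overline{\mathrm{Nm}_{k/E}}\bigr)$ has finite order componentwise; it does not place its values in the diagonal $E^{\times}$, nor give local constancy as a map to $E^{\times}$. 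This is precisely what the paper supplies and your argument lacks: Valloni's refinement (Remark \ref{val}) gives, for each $t$, a \emph{unique} $u(t)\in G(\mathbb{Q})=\{x\in E^{\times}\mid x\overline{x}=1\}$ with $\rho^{ab}(\mathrm{art}_{k}(t))=u(t)\overline{s}/s$ exactly in $G(\mathbb{A}_{f})$, and the discreteness of $G(\mathbb{Q})$ in $\mathbb{A}_{E,f}^{\times}$ then yields local constancy and the homomorphism property (Proposition \ref{hecke1}); no infinity-type bookkeeping can substitute for this. Two smaller remarks: your uniqueness argument implicitly assumes the families $\{\chi_{\tau}\}$ attached to $u$ and $u'$ agree $\tau$-by-$\tau$, whereas condition (ii) only identifies them as unordered families, so the step $\tau(u/u')=1$ needs justification; on the positive side, your observation that $E\otimes\mathbb{Q}_{l}$-linearity forces a genuine diagonalization (not merely of the semisimplification) would, if the equivariance gap were filled, give a cleaner alternative to the paper's separate semisimplicity argument (Lemma \ref{ab simple}).
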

       \begin{rmk}
           If $k$ does not contain $E$, Theorem \ref{main theorem} can be extended to a  K3 surface $X$  over  $kE$, and  Theorem \ref{main theorem} is  valid for   K3 surface $X$ over $kE$.
       \end{rmk}
    By Theorem \ref{main theorem}, we obtain the following equality 
   \begin{align}
       L(s, T_{l}(X_{\overline{k}})\otimes \mathbb{Q}_{l})=\prod_{\tau\in {\rm Hom}(E, \mathbb{C})}L(s, \chi_{\tau}). \tag{1.4}
   \end{align}
   Here $L(s, T_{l}(X_{\overline{k}})\otimes \mathbb{Q}_{l})$ denotes the $L$-function of $\rho_{l}$ (see \cite[Section 2]{tay}). We call the equality (1.4)  the \textbf{Motivic modularity}  of $X$.
   \\\\ \indent A key point in the proof of Theorem \ref{main theorem} is to construct algebraic Hecke characters $\chi_{\tau}$ satisfying  condition (i) of Theorem \ref{main theorem}. The fact that the characters $\chi_{\tau}$ satisfy the property (ii) follows from the semisimplicity of $\rho_{l}$. These are applications  of the main theorem of complex multiplication for K3 surfaces by Rizov \cite{Rizov10}. 
   \\\indent On the other hand, the semisimplicity also follows from the theory of absolute Hodge cycles by Deligne \cite{dmos};  Indeed, let $X$ be a K3 surface with CM over $k$ by a CM field $E$. The Hodge cycles 
   \begin{align}E:={\rm End}_{Hdg}T(X_{\mathbb{C}})_{\mathbb{Q}}\subset H^{4}(X_{\mathbb{C}}\times X_{\mathbb{C}}, \mathbb{Q})(2) \notag 
   \end{align}are absolute Hodge cycles (as a consequence of Deligne's Principle B \cite[I, Theorem 2.12]{dmos} and the Kuga-Satake construction \cite[Proposition 6.5]{del 1}). Hence the action of ${\rm Gal}(\overline{k}/k)$ on $T(X_{\mathbb{C}})_{\mathbb{A}_{f}}$ commutes with the action  of $E$. It follows that the image of the Galois representation $\rho:{\rm Gal}(\overline{k}/k)\to GL(T(X_{\mathbb{C}})_{\mathbb{A}_{f}})$ is contained in  $\mathbb{A}^{\times}_{E, f}$. Since $T(X_{\mathbb{C}})_{\mathbb{A}_{f}}$ is a rank-1 $\mathbb{A}_{E, f}$-module, it follows that $\rho$ is semisimple. 
   \begin{rmk}
       In general, Zarhin and Skorobogatov \cite{sz} proved that the Galois representation (1.2) is semisimple for any K3 surface. Their result is based on the semisimplicity of the Galois representations for abelian varieties due to Faltings \cite{fal}. 
       \end{rmk}
 In the final section, we compute, as an example of Theorem \ref{main theorem}, the algebraic Hecke characters in Theorem \ref{main theorem} for Kummer surfaces associated with CM simple abelian surfaces. The main theorem of complex multiplication for simple abelian surfaces describes the Galois action on the \textbf{Tate module} (Milne \cite{milne2}). Using this result, we compute the Galois action on the \textbf{transcendental part} of a CM simple abelian surface (see Proposition \ref{simplecm}).  Finally, we show that the Hecke characters obtained from Proposition \ref{simplecm} coincide with those given in Theorem \ref{main theorem}.    
\\\\$\textbf{Acknowledgements}$
\\$\indent$I would like to thank the referee for critical and constructive comments and suggestions for the improvement of the paper. 
  \\$\indent$This paper is based on the author's master's thesis \cite{riku}. The author would like to thank his advisor, Sho Tanimoto, for his support and advice. He kindly supported the author when the author was taking a leave of absence. The author thanks  Noriko Yui for her comments and suggestions. The author  would also like to thank Professors Kazuhiro Fujiwara, Kazuhiro Ito, Takuya Yamauchi, and Alexei N Skorobogatov for their comments and suggestions.
\\ $\indent$The author was partially supported by JST FOREST program Grant number JPMJFR212Z.
\section{Preliminaries}
\subsection{Abelian $l$-adic representations}
$\indent$For a rational prime $l$, let $\mathbb{Q}_{l}$ be the $l$-adic field, and let  $\overline{\mathbb{Q}_{l}}$ be an algebraic closure of $\mathbb{Q}_{l}$.
\begin{dfn} (Abelian $l$-adic representations)
    \\$\indent$ Let $k$ be a number field. An \textbf{$l$-adic Galois representation} is a finite-dimensional $\overline{\mathbb{Q}_{l}}$-continuous representation of the Galois group ${\rm Gal}(\overline{k}/k)$. An \textbf{abelian $l$-adic representation} is an $l$-adic Galois representation whose image is commutative.
\end{dfn}
\begin{exmp}
    Let $k$ be a number field, let $\overline{k}$ be an algebraic closure, and let $X$ be a smooth projective variety defined over $k$. Let $X_{\overline{k}}:=X\times_{{\rm Spec}(k)}{\rm Spec}(\overline{k})$. Then the $i$-th  \'etale cohomology $H^{i}(X_{\overline{k}}, \mathbb{Q}_{l})$ is a finite-dimensional $\mathbb{Q}_{l}$-vector space. If $\sigma\in{\rm Gal}(\overline{k}/k)$, it induces the automorphism of $X_{\overline{k}}$. Hence we obtain the $l$-adic Galois representation 
\begin{align} \rho_{l} : {\rm Gal}(\overline{k}/k)\to GL(H^{i}(X_{\overline{k}}, \mathbb{Q}_{l})). \tag{2.1}\end{align}
\end{exmp}
 Now we recall the main statements of global class field theory.
    Let $k$ be a number field, let $\mathbb{A}^{\times}_{k}$ be the idele group, and let $k^{ab}$ be the  maximal abelian extension of $k$.  Let $[-, k] : \mathbb{A}^{\times}_{k}\to {\rm Gal}(k^{ab}/k)$ be the reciprocity map for $k$. The \textbf{Artin map} ${\rm art}_{k} : \mathbb{A}^{\times}_{k}\to {\rm Gal}(k^{ab}/k)$ is defined by ${\rm art}_{k} := [-, k]^{-1}$.
    \begin{thm}\cite[Chapter II, Theorem 3.5]{silv}\label{class 2}
        Let $k$ be a number field. The Artin map ${\rm art}_{k}$ has the following properties :
        \\\indent (i) The Artin map is surjective and $k^{\times}$ is contained in the kernel of ${\rm art}_{k}$.
        \\\indent (ii) For any finite abelian extension $K/k$, the Artin map is compatible with the norm map ${\rm Nm}_{K/k} : \mathbb{A}^{\times}_{K}\to \mathbb{A}^{\times}_{k}$, i.e., for all $x\in \mathbb{A}^{\times}_{K}$ the following diagram
        \[\xymatrix{\mathbb{A}^{\times}_{K}
        \ar[rr]^{{\rm art}_{K}(x)}\ar[d]_{{\rm Nm}_{K/k}}&&{\rm Gal}(K^{ab}/K)\ar[d]\\
        \mathbb{A}^{\times}_{k} \ar[rr]_{{\rm art}_{k}({\rm Nm}_{K/k}(x))} && {\rm Gal}(k^{ab}/k)}
        \] is commutative.
        \\\indent (iii) Let $\mathfrak{p}$ be a prime ideal of $k$, let $I^{ab}_{\mathfrak{p}}\subset {\rm Gal}(k^{ab}/k)$ be an inertia group of $\mathfrak{p}$ for the extension $k^{ab}/k$, let $\pi_{\mathfrak{p}}$ be a uniformizer at $\mathfrak{p}$, and let $K/k$ be any abelian extension that is unramified at $\mathfrak{p}$. Then
            ${\rm art}_{k}(\pi_{\mathfrak{p}})^{-1}|_{K}$
        is the Frobenius for $K/k$ at $\mathfrak{p}$ and ${\rm art}_{k}(\mathscr{O}_{\mathfrak{p}}^{\times})=I^{ab}_{\mathfrak{p}}$ where $\mathscr{O}_{\mathfrak{p}}$ is the  ring of integers of $k_{\mathfrak{p}}$. 
    \end{thm}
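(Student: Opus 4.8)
The plan is to construct the reciprocity map $[-,k]$ place by place out of local class field theory and then deduce properties (i)--(iii) from the corresponding local statements together with the global reciprocity law. For each place $v$ of $k$, local class field theory furnishes a local reciprocity map $\theta_v\colon k_v^\times\to{\rm Gal}(k_v^{ab}/k_v)$; fixing compatible embeddings $\overline{k}\hookrightarrow\overline{k_v}$ identifies ${\rm Gal}(k_v^{ab}/k_v)$ with the relevant decomposition quotient inside ${\rm Gal}(k^{ab}/k)$, and one sets $[x,k]=\prod_v\theta_v(x_v)$ for an idele $x=(x_v)$. The first point I would check is that this product is well defined: for any finite subextension $K/k$ of $k^{ab}$, the extension $K/k$ is unramified at almost all $v$, while almost all components $x_v$ lie in $\mathscr{O}_v^\times$; since the local map carries units into the inertia subgroup, which acts trivially on the unramified part, only finitely many factors act nontrivially on $K$. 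Hence the product converges in the profinite topology of ${\rm Gal}(k^{ab}/k)$ and ${\rm art}_k:=[-,k]^{-1}$ is a well-defined continuous homomorphism.

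Next I would establish property (i). That $k^\times$ lies in the kernel of ${\rm art}_k$ is precisely the Artin reciprocity law in idelic form: for a principal idele $\alpha\in k^\times$ one must show $\prod_v\theta_v(\alpha)=1$ in ${\rm Gal}(k^{ab}/k)$. I expect this to be the heart of the matter (see below). Granting it, surjectivity follows from the main theorem of global class field theory, which identifies ${\rm art}_k$ with the canonical isomorphism $\mathbb{A}^\times_k/(k^\times\cdot D_k)\xrightarrow{\sim}{\rm Gal}(k^{ab}/k)$, where $D_k$ is the connected component of the identity in the idele class group; in particular the image is dense and, by compactness of the norm-one idele classes, equals all of ${\rm Gal}(k^{ab}/k)$.

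For property (ii) I would argue by functoriality of the local maps. Fixing a place $w$ of $K$ above $v$, local class field theory gives the compatibility $\theta_v\circ{\rm Nm}_{K_w/k_v}=\mathrm{res}\circ\theta_w$, where $\mathrm{res}\colon{\rm Gal}(K_w^{ab}/K_w)\to{\rm Gal}(k_v^{ab}/k_v)$ is restriction. Since ${\rm Nm}_{K/k}$ on ideles is the product over $w\mid v$ of the local norms, taking the product over all places and restricting to $k^{ab}\subset K^{ab}$ yields exactly the commutativity of the stated square; here I use that $K\subset k^{ab}$ forces $k^{ab}\subset K^{ab}$, so that restriction ${\rm Gal}(K^{ab}/K)\to{\rm Gal}(k^{ab}/k)$ is the right vertical arrow. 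Property (iii) is then read off from the unramified case of local class field theory at the single place $\mathfrak{p}$: when $K/k$ is unramified at $\mathfrak{p}$, the local reciprocity map $\theta_\mathfrak{p}$ carries a uniformizer $\pi_\mathfrak{p}$ to a Frobenius element of ${\rm Gal}(K_w/k_\mathfrak{p})$ and carries $\mathscr{O}_\mathfrak{p}^\times$ onto the inertia subgroup. Inverting, as in the definition ${\rm art}_k=[-,k]^{-1}$, and restricting to $K$ gives both the Frobenius statement for ${\rm art}_k(\pi_\mathfrak{p})^{-1}|_K$ and the equality ${\rm art}_k(\mathscr{O}_\mathfrak{p}^\times)=I_\mathfrak{p}^{ab}$ (for which the inversion is immaterial, as the image is a subgroup).

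The main obstacle is the reciprocity law used in (i), i.e.\ that principal ideles act trivially. The standard route is cohomological: one computes the Galois cohomology of the idele class group $C_k=\mathbb{A}^\times_k/k^\times$, proving the two fundamental inequalities to show that $H^2({\rm Gal}(K/k),C_K)$ is cyclic of order $[K:k]$ with a canonical generator (the fundamental class), and then verifies that cup product with this class induces the reciprocity isomorphism compatibly with the local fundamental classes. The vanishing $\prod_v\theta_v(\alpha)=1$ for $\alpha\in k^\times$ is equivalent to the global fundamental class restricting to the sum of the local ones, which is the genuinely deep input; everything else in the theorem is formal once this is in place.
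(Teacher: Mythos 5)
The paper contains no proof of this statement: it is quoted from \cite[Chapter~II, Theorem~3.5]{silv} as a standard package of global class field theory, so there is no argument of the paper's to compare yours against. As an outline of the standard proof, your proposal is correct. The definition $[x,k]=\prod_v\theta_v(x_v)$ with the convergence check (almost all $x_v$ are units, almost all places are unramified in any finite subextension of $k^{ab}$, and local units land in inertia), property (ii) from the local norm functoriality $\theta_v\circ{\rm Nm}_{K_w/k_v}=\mathrm{res}\circ\theta_w$ multiplied over $w\mid v$ (and your observation that $K\subset k^{ab}$ forces $k^{ab}\subset K^{ab}$, so the right vertical arrow really is restriction, is the correct justification), property (iii) from the unramified local theory, and surjectivity from density plus compactness of the norm-one idele classes are exactly the standard reductions; and you rightly isolate the single deep input, $\prod_v\theta_v(\alpha)=1$ for $\alpha\in k^\times$, together with the standard cohomological route to it (the two inequalities, the fundamental class, and its compatibility with the local fundamental classes).

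Two caveats. First, be clear that what you have written is a program rather than a proof: the fundamental-class machinery you defer to \emph{is} the content of global class field theory, so at the level of rigor actually attained your text has the same logical status as the paper's citation --- appropriate for a background theorem, but it should be labeled as such. Second, keep the Frobenius normalization straight: since the paper sets ${\rm art}_k:=[-,k]^{-1}$ (and elsewhere uses geometric Frobenius), you should say explicitly that the classical $\theta_{\mathfrak{p}}$ sends $\pi_{\mathfrak{p}}$ to the arithmetic Frobenius on unramified extensions, so that ${\rm art}_k(\pi_{\mathfrak{p}})^{-1}|_K=[\pi_{\mathfrak{p}},k]|_K$ is the Frobenius asserted in (iii); your remark that the inversion is immaterial for ${\rm art}_k(\mathscr{O}^{\times}_{\mathfrak{p}})=I^{ab}_{\mathfrak{p}}$ because the image is a subgroup is exactly the right point.
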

Let $k$ be a number field, and  denote by  $\Sigma_{k}$ the set of all finite places of $k$. Let $v\in \sum_{k}$ be a finite place of $k$, and let $\mathfrak{p}$ be the prime ideal corresponding to $v$. Put $q :=|\mathscr{O}_{k}/\mathfrak{p}|$. Then there exists the following exact sequence:
\begin{align}
    1\to I_{\mathfrak{p}}\to {\rm Gal}(\overline{k_{\mathfrak{p}}}/k_{\mathfrak{p}})\to {\rm Gal}(\overline{\mathbb{F}_{q}}/\mathbb{F}_{q})\to 1,\tag{2.2}
\end{align}
where $I_{\mathfrak{p}}$ is the inertia group at $\mathfrak{p}$. The group ${\rm Gal}(\overline{\mathbb{F}_{q}}/\mathbb{F}_{q})$ is topologically generated by the \textbf{geometric Frobenius} $x\mapsto x^{q^{-1}}$. By the exact sequence (2.2), there exists an element ${\rm Frob}_{\mathfrak{p}}\in {\rm Gal}(\overline{k}_{\mathfrak{p}}/k_{\mathfrak{p}})$ which induces the geometric Frobenius on $\overline{\mathbb{F}_{q}}$, up to the action of $I_{\mathfrak{p}}$. This  element ${\rm Frob}_{\mathfrak{p}}$ is called the $\textbf{(geometric) Frobenius element}$ at $\mathfrak{p}$. 
\begin{dfn}
    Let $v\in \sum_{k}$ and let $\mathfrak{p}$ be the prime ideal associated to $v$. An $l$-adic representation $\rho_{l}$ is said to be \textbf{unramified} at $v$ if $\rho_{l}(I_{\mathfrak{p}})=\{id\}$.
\end{dfn}
\begin{prop}
    Let $X$ be a smooth projective surface defined over a number field $k$. Suppose that the $l$-adic representation $\rho_{l}$ is abelian, i.e., the image is abelian. Then $\rho_{l}$ induces a Galois representation
    \begin{align}
        \rho_{l}^{ab} : {\rm Gal}(k^{ab}/k)\to GL(H^{2}_{et}(X_{\overline{k}}, \mathbb{Q}_{l})).\notag
    \end{align}
    Moreover, $\rho_{l}$ is unramified at $v$ if and only if $\rho_{l}^{ab}(I^{ab}_{\mathfrak{p}})=\{id\}$ where $\mathfrak{p}$ is the prime of $k$ associated to $v$.
\end{prop}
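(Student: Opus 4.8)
The plan is to prove the two assertions separately: first that $\rho_{l}$ descends to the abelianized Galois group, and then that this descent translates the inertia condition at $\mathfrak{p}$ into the claimed one. Throughout write $\pi : {\rm Gal}(\overline{k}/k)\twoheadrightarrow {\rm Gal}(k^{ab}/k)$ for the canonical projection, whose kernel is the closure $\overline{[{\rm Gal}(\overline{k}/k),{\rm Gal}(\overline{k}/k)]}$ of the commutator subgroup, since by definition $k^{ab}$ is the fixed field of that closure.

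For the factorization, I would argue that because $\rho_{l}$ has abelian image, its kernel is a closed normal subgroup of ${\rm Gal}(\overline{k}/k)$ that contains the commutator subgroup, hence contains its closure $\ker \pi$. By the universal property of the quotient, $\rho_{l}$ then factors uniquely and continuously as $\rho_{l}=\rho_{l}^{ab}\circ \pi$ for a representation $\rho_{l}^{ab} : {\rm Gal}(k^{ab}/k)\to GL(H^{2}_{et}(X_{\overline{k}},\mathbb{Q}_{l}))$. This gives the first assertion; the surface hypothesis plays no role here beyond guaranteeing the representation exists.

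For the ramification equivalence, I would fix an embedding $\overline{k}\hookrightarrow \overline{k_{\mathfrak{p}}}$, which realizes the local Galois group ${\rm Gal}(\overline{k_{\mathfrak{p}}}/k_{\mathfrak{p}})$ as a decomposition group inside ${\rm Gal}(\overline{k}/k)$ and identifies its inertia subgroup with the group $I_{\mathfrak{p}}$ from the exact sequence (2.2). The key step is to verify that $\pi$ carries $I_{\mathfrak{p}}$ onto the inertia group $I^{ab}_{\mathfrak{p}}$ of the abelian extension $k^{ab}/k$ at $\mathfrak{p}$, i.e. $\pi(I_{\mathfrak{p}})=I^{ab}_{\mathfrak{p}}$. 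This rests on the functoriality of inertia under passage to the subextension $k^{ab}/k$, and is consistent with Theorem \ref{class 2}(iii), which independently identifies $I^{ab}_{\mathfrak{p}}$ with ${\rm art}_{k}(\mathscr{O}^{\times}_{\mathfrak{p}})$. Granting this, one computes $\rho_{l}(I_{\mathfrak{p}})=\rho_{l}^{ab}(\pi(I_{\mathfrak{p}}))=\rho_{l}^{ab}(I^{ab}_{\mathfrak{p}})$, so $\rho_{l}(I_{\mathfrak{p}})=\{id\}$ holds precisely when $\rho_{l}^{ab}(I^{ab}_{\mathfrak{p}})=\{id\}$, which is the stated equivalence.

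I expect the main obstacle to be the identification $\pi(I_{\mathfrak{p}})=I^{ab}_{\mathfrak{p}}$: one must check that the chosen decomposition group, the local-to-global comparison of inertia subgroups, and the abelianization map are all compatible, so that the image of the full inertia under $\pi$ is exactly the inertia of the maximal abelian extension and not merely contained in it. Once this compatibility is pinned down, the remaining arguments are formal consequences of the universal property of the quotient and the continuity of $\rho_{l}$.
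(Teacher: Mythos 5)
Your factorization argument is fine, and you have correctly isolated the crux of the proposition: everything turns on the identification $\pi(I_{\mathfrak{p}})=I^{ab}_{\mathfrak{p}}$. But at exactly that point your proposal has a genuine gap. Functoriality of inertia --- the fact that restriction carries inertia elements to inertia elements --- gives only the inclusion $\pi(I_{\mathfrak{p}})\subset I^{ab}_{\mathfrak{p}}$, and that inclusion proves only the easy direction of the equivalence: if $\rho^{ab}_{l}(I^{ab}_{\mathfrak{p}})=\{id\}$ then $\rho_{l}(I_{\mathfrak{p}})=\rho^{ab}_{l}(\pi(I_{\mathfrak{p}}))=\{id\}$. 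The direction for which you actually need the identification, namely that $\rho_{l}(I_{\mathfrak{p}})=\{id\}$ forces $\rho^{ab}_{l}(I^{ab}_{\mathfrak{p}})=\{id\}$, requires the reverse inclusion $I^{ab}_{\mathfrak{p}}\subset \pi(I_{\mathfrak{p}})$, i.e.\ that every element of the abelian inertia group lifts to an element of the full inertia group. You grant this (``Granting this, one computes\dots'') and concede in your last paragraph that it is ``the main obstacle,'' but you never supply an argument for it; citing consistency with Theorem \ref{class 2}(iii) is not one, since that theorem describes $I^{ab}_{\mathfrak{p}}$ via the Artin map and says nothing about lifting elements back to ${\rm Gal}(\overline{k}/k)$.

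The missing argument is precisely what the paper's proof consists of, and it is short: the residue field extension does not change under abelianization, because finite extensions of $\mathbb{F}_{q}$ are cyclic, hence abelian, so $\overline{\mathbb{F}_{q}}=\mathbb{F}_{q}^{ab}$ and the natural map ${\rm Gal}(\overline{\mathbb{F}_{q}}/\mathbb{F}_{q})\to {\rm Gal}(\mathbb{F}_{q}^{ab}/\mathbb{F}_{q})$ is the identity. Given $\sigma\in I^{ab}_{\mathfrak{p}}$, lift it to any $\sigma'\in {\rm Gal}(\overline{k_{\mathfrak{p}}}/k_{\mathfrak{p}})$; comparing the exact sequence (2.2) with its abelian analogue, the commutativity of the diagram shows that the image of $\sigma'$ in ${\rm Gal}(\overline{\mathbb{F}_{q}}/\mathbb{F}_{q})={\rm Gal}(\mathbb{F}^{ab}_{q}/\mathbb{F}_{q})$ equals the image of $\sigma$, which is trivial since $I^{ab}_{\mathfrak{p}}$ is the kernel of ${\rm Gal}(k^{ab}_{\mathfrak{p}}/k_{\mathfrak{p}})\to {\rm Gal}(\mathbb{F}^{ab}_{q}/\mathbb{F}_{q})$; hence $\sigma'\in I_{\mathfrak{p}}$ and $\rho^{ab}_{l}(\sigma)=\rho_{l}(\sigma')=id$. (Equivalently: since ${\rm Gal}(\overline{k_{\mathfrak{p}}}/k_{\mathfrak{p}})/I_{\mathfrak{p}}\cong {\rm Gal}(\overline{\mathbb{F}_{q}}/\mathbb{F}_{q})$ is already abelian, $I^{ab}_{\mathfrak{p}}$ is the closure of $\pi(I_{\mathfrak{p}})$, and $\pi(I_{\mathfrak{p}})$, being the continuous image of a compact group, is closed.) Note that surjectivity of inertia onto inertia under restriction to a normal subextension is a true statement in general, but it is a statement that requires proof --- it does not follow from functoriality alone --- and in the case at hand the proof is exactly the residue-field observation above, which your proposal nowhere makes.
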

\begin{proof}
    Let $q$ be the norm of $\mathfrak{p}$. Since  finite extensions of $\mathbb{F}_{q}$ are cyclic, we have $\overline{\mathbb{F}_{q}}=\mathbb{F}^{ab}_{q}$. Consider the following commutative diagram:
    \[\xymatrix{I_{\mathfrak{p}}\ar[r] \ar[d]&{\rm Gal}(\overline{k_{\mathfrak{p}}}/k_{\mathfrak{p}})\ar[r] \ar[d]_{res} & {\rm Gal}(\overline{\mathbb{F}_{q}}/\mathbb{F}_{q})\ar[d]^{id}
    \\ I_{\mathfrak{p}}^{ab}\ar[r] &{\rm Gal}(k^{ab}_{\mathfrak{p}}/k_{\mathfrak{p}})\ar[r] &  {\rm Gal}(\mathbb{F}^{ab}_{q}/\mathbb{F}_{q}).}\]
    Suppose that $\rho_{l}(I_{\mathfrak{p}})=\{id\}$. Let $\sigma \in I_{\mathfrak{p}}^{ab}$. Then there exists an element $\sigma^{'}\in {\rm Gal}(\overline{k}/k)$ such that $\sigma^{'}|_{k^{ab}}=\sigma$. This implies that  $\rho_{l}^{ab}(\sigma)=\rho_{l}(\sigma^{'})$. On the other hand, since $I^{ab}_{\mathfrak{p}}$ is the kernel of ${\rm Gal}(k^{ab}_{\mathfrak{p}}/k_{\mathfrak{p}})\to {\rm Gal}(\mathbb{F}_{q}^{ab}/\mathbb{F}_{q})$, it follows from the above diagram that $\sigma^{'}\in I_{\mathfrak{p}}$. Hence we have $\rho_{l}^{ab}(\sigma)=\rho_{l}(\sigma^{'})=id$.
    \\ The converse is clear.
\end{proof}
 If an $l$-adic representation $\rho_{l} : {\rm Gal}(\overline{k}/k)\to GL(V)$ is unramified at $v$, then the action of $\rho_{l}({\rm Frob}_{\mathfrak{p}})$ on $V$ is well-defined. Then we denote the characteristic polynomial of $\rho_{l}({\rm Frob}_{\mathfrak{p}})$ by $P_{v, \rho_{l}}(T)$.
\begin{dfn} (Rational representations)
   \\\indent An $l$-adic representation $\rho_{l}$ is said to be \textbf{rational} if there exists a finite subset $S\subset \sum_{k}$ such that:
   \\\indent (i) Any element of $\sum_{k}\backslash  S$ is unramified for  $\rho_{l}$.
   \\\indent (ii) If $v\notin S$, the coefficients of $P_{v, \rho_{l}}(T)$ lie in  $\mathbb{Q}$.
\end{dfn}
\begin{rmk}
    Dwork \cite{dw} proved that the Galois representation (2.1) is rational. This is a part of the Weil conjecture.
\end{rmk}
\begin{dfn} (Hecke characters)
\\$\indent$Let $k$ be a number field. 
    A continuous homomorphism $\chi : \mathbb{A}^{\times}_{k}\to\mathbb{C}^{\times}$ is called a \textbf{Hecke character} if $\chi$ is trivial on $k^{\times}$.
\end{dfn}
\begin{dfn} Let $k$ be a number field. Let $v\in \sum_{k}$ be a finite place,  and let $\mathfrak{p}$ be the prime ideal associated with $v$. A Hecke character $\chi : \mathbb{A}^{\times}_{k}\to \mathbb{C}^{\times}$ is said to be \textbf{unramified} at $v$ if it is trivial on $\mathscr{O}_{\mathfrak{p}}^{\times}\subset \mathbb{A}^{\times}_{k}$.
\end{dfn}
We recall that there exists a correspondence between the set of one-dimensional locally algebraic Galois representations and  the set of algebraic Hecke characters. Let $k$ be a number field,  and let $v$ be an infinite place.  We denote by $k^{\times 0}_{v}$ the connected component of the identity of $k_{v}$.
\begin{dfn} (Algebraic Hecke characters)
\\$\indent$ Let $k$ be a number field.  A Hecke character $\chi : \mathbb{A}^{\times}_{k}\to \mathbb{C}^{\times}$ is said to be \textbf{algebraic} if, for any infinite place $v$, there exist integers $(a_{\tau})_{\tau \in {\rm Hom}_{\mathbb{R}}(k_{v}, \mathbb{C})}$ such that 
  \begin{align}
      \chi_{v}(x_{v})=\prod_{\tau\in {\rm Hom}_{\mathbb{R}}(k_{v}, \mathbb{C})}\tau(x_{v})^{-a_{\tau}},~~~(x_{v}\in k_{v}^{\times 0}), \notag
  \end{align}
  where $\chi_{v} : k_{v}^{\times}\to \mathbb{C}^{\times}$ is the $v$-component of $\chi$.
  \end{dfn}
  \begin{dfn}(Locally algebraic representations)
  \\\indent Let $k$ be a number field, and let $\rho_{l} : {\rm Gal}(k^{ab}/k)\to GL(V)$ be an $l$-adic abelian representation. 
  \\\indent (i)~Let $v|l$. Put $\rho_{v} := \rho_{l}|_{{\rm Gal}(\overline{k_{v}}/k_{v})}$, and let  $T:={\rm Res}_{k_{v}/\mathbb{Q}_{l}}(\mathbb{G}_{m, k_{v}})$. The representation $\rho_{l}$ is said to be \textbf{locally algebraic at $v$} if there exists an algebraic morphism $r : T\to GL(V)$ such that the following diagram commutes on a neighborhood of 1 in $k^{\times}_{v}$:
  \[\xymatrix{{\rm Gal}(k_{v}^{ab})\ar[r]^{\rho_{v}}& GL(V) \\ k_{v}^{\times}.\ar[u]^{{\rm art}_{k}}\ar[ur]_{r}}\]
   \indent (ii) The $l$-adic abelian representation $\rho_{l}$ is said to be \textbf{locally algebraic} if it is locally algebraic at all places $v|l$.
\end{dfn}
\begin{thm}\cite[Chapter III-1-Proposition 2]{serre}\label{1gal}
    Let $l$ be a prime. Fix an isomorphism of fields $\iota : \overline{\mathbb{Q}_{l}}\to \mathbb{C}$. Let $k$ be a number field. Then there exists a one-to-one correspondence  
    \begin{align}
     \{1\textendash{\rm dimensional~ locally~algebraic~representation}\}&\leftrightarrow\{{\rm algebraic~Hecke~character}\}\notag
     \\ \rho_{l}&\leftrightarrow\chi.\notag   
    \end{align}
    Under this correspondence, we have $\chi_{v}=\iota\circ \rho_{l, v}\circ {\rm art}_{k}$ for all finite places $v\notmid l$.
\end{thm}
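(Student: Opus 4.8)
The plan is to build the correspondence explicitly in both directions out of global class field theory (Theorem \ref{class 2}) and to verify that the two constructions are mutually inverse and satisfy $\chi_{v}=\iota\circ\rho_{l,v}\circ\mathrm{art}_{k}$ at every finite place $v\nmid l$. I begin with the direction sending a $1$-dimensional locally algebraic representation $\rho_{l}:\mathrm{Gal}(k^{ab}/k)\to\overline{\mathbb{Q}_{l}}^{\times}$ to a Hecke character. Composing with the Artin map gives a continuous homomorphism $\phi:=\rho_{l}\circ\mathrm{art}_{k}:\mathbb{A}_{k}^{\times}\to\overline{\mathbb{Q}_{l}}^{\times}$, which is trivial on $k^{\times}$ by Theorem \ref{class 2}(i). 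For each $v\mid l$, local algebraicity furnishes integers $(n_{\tau})_{\tau\in\mathrm{Hom}(k_{v},\overline{\mathbb{Q}_{l}})}$ with $\phi_{v}(x)=\prod_{\tau}\tau(x)^{n_{\tau}}$ for $x$ near $1$; transporting the $\tau$ to embeddings $\sigma:k\to\mathbb{C}$ via $\iota$ and collecting over $v\mid l$ packages this data into a single algebraic character $\alpha=\prod_{\sigma}\sigma^{n_{\sigma}}$ of $\mathrm{Res}_{k/\mathbb{Q}}\mathbb{G}_{m}$.

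I then define $\chi:\mathbb{A}_{k}^{\times}\to\mathbb{C}^{\times}$ by $\chi:=(\iota\circ\phi)\cdot C$, where the correction $C$ is supported at the places over $l$ and over $\infty$: at $v\mid\infty$ its component is $\prod_{\sigma}\sigma(x_{v})^{n_{\sigma}}$, while at $v\mid l$ its component is $\prod_{\tau}\iota(\tau(x_{v}))^{-n_{\tau}}$. Two verifications are then needed. First, $\chi$ is continuous for the complex topology: at finite $v\nmid l$ nothing is changed, and $\iota\circ\phi_{v}$ is continuous because $\phi_{v}$ has finite image on $\mathscr{O}_{v}^{\times}$ (finiteness of the inertia image for an abelian $l$-adic character, via quasi-unipotence); at $v\mid\infty$ the added factor is algebraic hence continuous; and at $v\mid l$ the factor $\prod_{\tau}\iota(\tau(x_{v}))^{-n_{\tau}}$ exactly cancels the transcendental algebraic part of $\iota\circ\phi_{v}$, leaving a locally constant, hence complex-continuous, character. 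Second, $\chi$ is trivial on $k^{\times}$: since $\phi$ is trivial there, $\chi(a)=C(a)$ for $a\in k^{\times}$, and the archimedean and $l$-adic corrections are the single algebraic character $\alpha$ read through $\iota$ with opposite signs, so $C(a)=\prod_{\sigma}\sigma(a)^{n_{\sigma}}\cdot\prod_{\sigma}\sigma(a)^{-n_{\sigma}}=1$. By construction the archimedean component has the algebraic form required in the definition, so $\chi$ is an algebraic Hecke character, and since no correction was made at finite $v\nmid l$ the identity $\chi_{v}=\iota\circ\rho_{l,v}\circ\mathrm{art}_{k}$ holds there.

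For the reverse construction I solve the same identity for $\phi$, setting $\phi:=\iota^{-1}\circ(\chi\cdot C^{-1})$ and descending through $\mathrm{art}_{k}^{-1}$ to a character $\rho_{l}$ of $\mathrm{Gal}(k^{ab}/k)$; this descent is legitimate precisely because, after removing the archimedean algebraic part, the character is trivial on $k^{\times}$ and on the connected component of $\mathbb{A}_{k}^{\times}/k^{\times}$, which is the kernel of $\mathrm{art}_{k}$. I expect the main obstacle to be exactly the point where $\iota^{-1}$ is applied: one must show that $\chi\cdot C^{-1}$ takes values in $\overline{\mathbb{Q}}^{\times}$ and that, after applying $\iota^{-1}$ and reinserting the $l$-adic algebraic factor, the result is $l$-adically continuous --- equivalently, that the archimedean infinity type and the $l$-adic infinity type genuinely match. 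This is the algebraicity (Weil's type $A_{0}$) of an algebraic Hecke character, which I would establish through its ideal-theoretic description: choosing a modulus $\mathfrak{m}$ outside of which $\chi$ is unramified, the associated homomorphism on fractional ideals prime to $\mathfrak{m}$ satisfies $\chi((\alpha))=\prod_{\sigma}\sigma(\alpha)^{-a_{\sigma}}$ for $\alpha\equiv 1\bmod\mathfrak{m}$, which forces the values $\chi(\mathfrak{p})$ into a fixed number field and makes the prescription $\mathrm{Frob}_{\mathfrak{p}}\mapsto\iota^{-1}(\chi(\mathfrak{p}))$ well-defined and $l$-adically continuous. Granting this, the facts that the two constructions are mutually inverse and that the compatibility at finite $v\nmid l$ holds both reduce to a place-by-place comparison via Theorem \ref{class 2}(ii),(iii).
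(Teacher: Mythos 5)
The paper offers no proof of this statement at all---it is quoted verbatim from Serre \cite[Ch.~III-1, Prop.~2]{serre}---and your two-way construction (composing with ${\rm art}_{k}$, correcting by the algebraic infinity type at the places above $l$ and $\infty$, and inverting via the type-$A_{0}$/ray-class-group algebraicity of the Hecke character) is precisely the standard argument of that cited source, correctly reproduced in substance. The only point left tacit is that continuity of $\chi$ on the restricted product requires, beyond place-by-place continuity, triviality on $\mathscr{O}_{v}^{\times}$ for almost all $v$; this follows from the same finiteness-of-inertia considerations you already invoke (the torsion part of the image factors through a finite abelian extension, while the pro-$l$ part kills $\mathscr{O}_{v}^{\times}$ for $v \nmid l$), so it is a half-line addition rather than a gap in the method.
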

\subsection{Mumford-Tate groups}
$\indent$Deligne gave a representation-theoretic definition and characterization of  Hodge structures in \cite{dmos}. We now  recall them. Let $V$ be a rational Hodge structure of weight $n$. In other words, there exists a decomposition $V_{\mathbb{C}}=\bigoplus_{p+q=n}V^{p, q}$ with $\overline{V^{p, q}}=V^{q, p}$. This gives a real representation 
\begin{align}
    \rho : \mathbb{C}^{\times}\to GL(V_{\mathbb{R}}), z\mapsto \rho(z) : v\mapsto (z^{-p}\overline{z}^{-q})v\notag
\end{align}
for $v\in V^{p, q}$. In particular, for $t\in \mathbb{R}^{\times}$, we have $\rho(t)=t^{-n} : v \mapsto t^{-n}v$. Conversely, if $V$ is a finite-dimensional $\mathbb{Q}$-vector space and $\rho : \mathbb{C}^{\times}\to GL(V_{\mathbb{R}})$ is a real representation such that $\rho(t)=t^{-n}$ for all $t\in \mathbb{R}^{\times}$, then  the $\mathbb{C}$-subspaces
\begin{align}
    V^{p, q}:=\{v\in V_{\mathbb{C}} | ~\rho(z)(v)=z^{-p}\overline{z}^{-q}v, {\rm for~all}~z\in \mathbb{C}^{\times}\},~~~p+q=n\notag
\end{align}
define a rational Hodge structure  of weight $n$ on $V$. Hence we obtain a one-to-one correspondence:
\begin{align}
    \{{\rm rational~Hodge~structure~of~weight}~n\} \leftrightarrow \{{\rm real~representation}~\mathbb{C}^{\times}\to GL(V_{\mathbb{R}})\notag
    \\ {\rm such~that} ~t\mapsto t^{-n}\cdot(-)~{\rm on}~\mathbb{R}^{\times}\}.\notag
\end{align}
On the other hand, a real representation $\mathbb{C}^{\times}\to GL(V_{\mathbb{R}})$ can be regarded as a representation of the Deligne torus $\mathbb{S}:={\rm Res}_{\mathbb{C}/\mathbb{R}}(\mathbb{G}_{m, \mathbb{C}})$. This is an $\mathbb{R}$ algebraic group defined by 
\begin{align}
    \{\mathbb{R}\textendash{\rm algebras}\}\ni A \mapsto \mathbb{S}(A):=(A\otimes_{\mathbb{R}}\mathbb{C})^{\times}\in \{{\rm groups}\}.\notag 
\end{align}
Hence, a rational Hodge structure on $V$ of weight $n$ corresponds to an algebraic $\mathbb{R}$-morphism $\rho : \mathbb{S}\to GL(V_{\mathbb{R}})$ such that $\rho(t)=t^{-n}$ for all $t\in \mathbb{R}^{\times}$. 
\begin{dfn} (Mumford-Tate groups)
   \\$\indent$ Let $V$ be a rational Hodge structure, and let $\rho : \mathbb{S} \to GL(V_{\mathbb{R}})$ be the algebraic $\mathbb{R}$-representation associated with $V$. The \textbf{Mumford-Tate group} ${\rm MT}(V)$ of $V$ is the smallest $\mathbb{Q}$-algebraic subgroup of $GL(V)$ such that $\rho(\mathbb{S}(\mathbb{R}))\subset {\rm MT}(V)(\mathbb{R})$. 
\end{dfn}
\begin{exmp}
    (i) Let $X$ be an algebraic  K3 surface over $\mathbb{C}$. Consider the second  cohomology $H^{2}(X, \mathbb{Z})$. This is a Hodge structure of weight $2$, hence the Tate twist $H^{2}(X, \mathbb{Z})(1)$ is of weight 0. By the Lefschetz $(1, 1)$-theorem (see  \cite[Chapter III, Proposition 3.3.2]{Huy2}), the N\'eron-Severi group ${\rm NS}(X)$ is isomorphic to $H^{2}(X, \mathbb{Z})(1)\cap H^{1,1}(X)$. Hence the orthogonal complement $T(X)$ is a sub-Hodge structure of weight $0$.
    \\\indent (ii) Let $A$ be an abelian surface over $\mathbb{C}$. Then the second cohomology $H^{2}(A, \mathbb{Z})$ is a Hodge structure of weight $2$, hence the Tate twist $H^{2}(A, \mathbb{Z})(1)$ is of weight 0. By the Lefschetz $(1, 1)$-theorem, ${\rm NS}(A)\cong H^{2}(A, \mathbb{Z})(1)\cap H^{1, 1}(A)$. Hence the orthogonal complement $T(A)$ is a sub-Hodge structure of weight $0$. 
    \end{exmp}
    ~\\
\subsection{K3 surfaces with CM}
~
\\$\indent$   To explain CM, we first define CM fields. 
   \begin{dfn} (CM fields) Let $K$ be a number field.
       \\\indent (i) $K$ is said to be \textbf{totally real} if $\sigma(K)\subset \mathbb{R}$ for all inclusions $\sigma : K\to \mathbb{C}$.
       \\\indent (ii) $K$ is said to be \textbf{totally  imaginary} if $\sigma(K)\not\subset\mathbb{R}$ for all inclusions $\sigma : K\to \mathbb{C}$.
       \\\indent (iii) A \textbf{CM field} is a totally imaginary quadratic extension of a totally real field.
   \end{dfn}
   \begin{exmp}
      (1) An imaginary quadratic field is a CM field. In fact, $\mathbb{Q}$ is totally real, and the imaginary quadratic extension  is not contained in $\mathbb{R}$.
      \\\indent (2) Let $p$ be a prime $\neq 2$, and let $\zeta_{p}$ be a $p$-th root of unity. Consider the field extensions $\mathbb{Q}\subset \mathbb{Q}(\zeta_{p}+\zeta_{p}^{-1})\subset \mathbb{Q}(\zeta_{p})$. Since $\zeta_{p}^{2}-(\zeta_{p}+\zeta_{p}^{-1})\zeta_{p}+1=0$, the extension $\mathbb{Q}(\zeta_{p}+\zeta_{p}^{-1})\subset \mathbb{Q}(\zeta_{p})$ has degree $2$. If $\sigma : \mathbb{Q}(\zeta_{p})\to \mathbb{C}$ is an inclusion, there exists an integer $n$ with $\sigma(\zeta_{p})=\zeta_{p}^{n}, (n, p)=1$. This shows that $\mathbb{Q}(\zeta_{p})$ is totally imaginary. Finally, the field $\mathbb{Q}(\zeta_{p}+\zeta_{p}^{-1})$ is totally real because 
      \begin{align}
          \sigma(\zeta_{p}+\zeta_{p}^{-1})=\zeta_{p}^{n}+\zeta_{p}^{-n}=2\cos{\left(\frac{2n\pi}{p}\right)}\in\mathbb{R}.\notag
      \end{align}
      Hence $\mathbb{Q}(\zeta_{p})$ is a CM field.
   \end{exmp}
   Let $X$ be an algebraic K3 surface over $\mathbb{C}$. Let $E_{X}:={\rm End}_{{\rm Hdg}}(T(X)_{\mathbb{Q}})$. If $\omega$ is a nowhere vanishing holomorphic $2$-form on $X$, then we have a homomorphism of $\mathbb{Q}$-algebras: 
    \begin{align}
        \sigma_{X} : E_{X}\to {\rm End}_{\mathbb{C}} (\mathbb{C}\omega)\cong \mathbb{C}. \tag{2.3}
    \end{align}
    Note that this morphism is independent of the choice of $\omega$. 
    \begin{prop}
        Let $X$ be a complex algebraic K3 surface. Then the homomorphism $\sigma_{X}$ of (2.3) is injective, and $E_{X}$ is a number field. 
    \end{prop}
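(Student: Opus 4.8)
The plan is to deduce both assertions from a single structural fact: the rational transcendental Hodge structure $T(X)_{\mathbb{Q}}$ is \emph{irreducible}, i.e.\ it admits no nonzero proper sub-Hodge structure. Before proving this, I would first record the (essentially formal) observation that $\sigma_{X}$ is a $\mathbb{Q}$-algebra homomorphism: every $f\in E_{X}$ preserves the Hodge decomposition, so it acts on the one-dimensional space $T(X)_{\mathbb{Q}}^{2,0}=\mathbb{C}\omega$ by a scalar $\sigma_{X}(f)$, and from $f(g(\omega))=\sigma_{X}(g)\,f(\omega)$ one reads off $\sigma_{X}(fg)=\sigma_{X}(f)\sigma_{X}(g)$, together with additivity and $\sigma_{X}({\rm id})=1$.

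The heart of the argument is irreducibility, and here I would use three geometric inputs for a K3 surface: (a) $\dim_{\mathbb{C}}T(X)^{2,0}=1$, spanned by $\omega$; (b) the intersection form on $H^{2}$ restricts to a \emph{nondegenerate} symmetric form on $T(X)_{\mathbb{Q}}$, because ${\rm NS}(X)_{\mathbb{Q}}$ is nondegenerate (Hodge index theorem) and $T(X)={\rm NS}(X)^{\bot}$, so $H^{2}(X,\mathbb{Q})={\rm NS}(X)_{\mathbb{Q}}\oplus T(X)_{\mathbb{Q}}$ orthogonally and in particular $T(X)_{\mathbb{Q}}\cap {\rm NS}(X)_{\mathbb{Q}}=0$; and (c) $\langle\omega,\overline{\omega}\rangle\neq 0$. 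By Lefschetz $(1,1)$ every rational class of type $(1,1)$ lies in ${\rm NS}(X)_{\mathbb{Q}}$, so (b) shows $T(X)_{\mathbb{Q}}$ contains no nonzero rational Hodge class of type $(1,1)$. Now given a nonzero sub-Hodge structure $W\subseteq T(X)_{\mathbb{Q}}$, the line $W^{2,0}\subseteq T(X)^{2,0}$ is either $0$ or all of $\mathbb{C}\omega$; in the first case reality forces $W^{0,2}=0$ as well, so $W$ would consist of rational $(1,1)$-classes and hence vanish. Thus $\omega\in W_{\mathbb{C}}$, and then (c) shows $\omega$ is not orthogonal to $W$, so the sub-Hodge structure $W^{\bot}\cap T(X)_{\mathbb{Q}}$ has trivial $(2,0)$-part and therefore vanishes; nondegeneracy of the form gives $\dim W=\dim T(X)_{\mathbb{Q}}$, i.e.\ $W=T(X)_{\mathbb{Q}}$.

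With irreducibility established, both claims follow quickly. For injectivity of $\sigma_{X}$: if $\sigma_{X}(f)=0$ then $\omega\in\ker f$, and since $\ker f$ is a sub-Hodge structure with nonzero $(2,0)$-part, irreducibility forces $\ker f=T(X)_{\mathbb{Q}}$, whence $f=0$. For the field property: $E_{X}$ is a finite-dimensional $\mathbb{Q}$-subalgebra of ${\rm End}_{\mathbb{Q}}(T(X)_{\mathbb{Q}})$, and the injective $\mathbb{Q}$-algebra homomorphism $\sigma_{X}\colon E_{X}\hookrightarrow\mathbb{C}$ realizes it as a finite-dimensional $\mathbb{Q}$-subalgebra of a field; such an algebra is an integral domain, hence a field, and being finite over $\mathbb{Q}$ it is a number field.

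I expect the main obstacle to be the second step, the proof of irreducibility, and within it the orthogonal-complement argument, which requires carefully assembling the nondegeneracy of the intersection form on $T(X)_{\mathbb{Q}}$ and the non-vanishing $\langle\omega,\overline{\omega}\rangle\neq 0$ to rule out a complementary sub-Hodge structure. Once irreducibility is in place, the reduction of both stated claims to it is essentially a Schur-type formality.
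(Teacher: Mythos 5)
Your proof is correct, and it is essentially the same argument as the one behind the paper's proof: the paper gives no argument of its own, deferring entirely to Huybrechts \cite{Huy} (Chapter 3), and the proof there is exactly your route — irreducibility of the Hodge structure $T(X)_{\mathbb{Q}}$ via Lefschetz $(1,1)$, nondegeneracy of the intersection form on $T(X)_{\mathbb{Q}}$, and $\langle\omega,\overline{\omega}\rangle\neq 0$, followed by the Schur-type deduction that $E_{X}$ embeds in $\mathbb{C}$ and is therefore a number field. In effect you have supplied a self-contained write-up of the cited proof (due to Zarhin), which the paper leaves as a black box.
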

    \begin{proof}
        See  \cite[Chapter III, Corollary 3.6]{Huy}.
    \end{proof}
    \begin{dfn} (CM K3 surfaces over $\mathbb{C}$)
    \\$\indent$ Let $X$ be an algebraic K3 surface over $\mathbb{C}$. We say that $X$ has \textbf{CM} over $\mathbb{C}$ if the following conditions hold: 
        \\\indent (i) $E_{X}$ is a CM field.
        \\\indent (ii) ${\rm dim}_{E_{X}}T(X)_{\mathbb{Q}}=1.$
    \end{dfn}
  By condition (ii), we have $[E_{X} : \mathbb{Q}]={\rm dim}_{\mathbb{Q}}T(X)_{\mathbb{Q}}.$
  \begin{rmk}
      Valloni \cite{val21} proved that the \textbf{reflex field} of the rational Hodge structure $T(X)_{\mathbb{Q}}$ is $\sigma_{X}(E_{X})$. 
  \end{rmk}
    
  \begin{exmp}\label{k3ex}
      (1) If $X$ is a singular K3 surface, i.e., it has  the maximal Picard rank $20$, then $X$ has CM over $\mathbb{C}$. Then the reflex field is an imaginary quadratic field. These considerations are given in \cite[Chapter 3, Remark 3.10]{Huy}. 
      \\\indent (2)  Let $X$ be a K3 surface over $\mathbb{C}$. Suppose that there exists an automorphism $\sigma\in {\rm Aut}(X)$ of order $N$ such that the transcendental rank ${\rm rank}_{\mathbb{Z}} T(X)$ is equal to  $\varphi(N)$,  where $\varphi$ is Euler's function. Then $X$ has CM over $\mathbb{C}$, and the reflex field is the cyclotomic field $\mathbb{Q}(\zeta_{N})$, where $\zeta_{N}$ is a primitive $N$-th root of unity.
      \\\indent (3) Let $A$ be an abelian surface over $\mathbb{C}$, and let $X$ be the Kummer surface associated to $A$. Then  $X$ has CM over $\mathbb{C}$ if and only if $A$ has CM over $\mathbb{C}$ (see \cite[Lemma 9.3]{Ito}).  
      \end{exmp}
\begin{dfn}\label{def field}
    Let $X$ be a K3 surface over a number field $k\subset \mathbb{C}$. Then we say that $X$ has CM over $k$ if 
    \\\indent (i) the scalar extension $X_{\mathbb{C}} := X\times_{{\rm Spec}(k)}{\rm Spec}(\mathbb{C})$ has CM over $\mathbb{C}$,
    \\\indent (ii) $k\supset \sigma_{X_{\mathbb{C}}}(E_{X_{\mathbb{C}}})$.
\end{dfn} 
\begin{rmk}Mukai, Nikulin,  and Buskin(\cite{mukai}, \cite{nik}, and \cite{bus}) proved that the Hodge conjecture for the self-product of a CM K3 surface is true (see  \cite[Corollary of Theorem 1.1]{bus}). Using this, Valloni \cite{val21} gave a condition for $X$ to have complex multiplication over $k$, analogous to the corresponding condition for abelian varieties.
\end{rmk}
Finally, we introduce a fundamental fact for CM K3 surfaces. Let $X$ be a K3 surface with CM over $\mathbb{C}$, and  let $G:={\rm MT}(T(X)_{\mathbb{Q}})$. Zarhin  proved the following  (see  \cite[Theorem 2.3.1]{Zarhin}): 
    \begin{thm}
       The algebraic group $G$ is isomorphic,  as a $\mathbb{Q}$-algebraic group, to the kernel of
        \begin{align}
            {\rm Nm} : {\rm Res}_{E/\mathbb{Q}}\mathbb{G}_{m, E}\to {\rm Res}_{F/\mathbb{Q}}\mathbb{G}_{m, F},\notag
        \end{align}
    where $E=\sigma_{X}(E_{X})$ and $F$ is the maximal totally real subfield of $E$.
    \end{thm}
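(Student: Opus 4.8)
The plan is to realize $G$ as a subtorus of $T_{E}:={\rm Res}_{E/\mathbb{Q}}\mathbb{G}_{m,E}$ and then to identify its cocharacter lattice with that of $\ker({\rm Nm})$ by a Galois-orbit computation. First I would reduce everything to $T_{E}$. Since $E=E_{X}$ consists of Hodge endomorphisms of $T(X)_{\mathbb{Q}}$ and ${\rm dim}_{E}T(X)_{\mathbb{Q}}=1$, the commutant of the $E$-action in $GL(T(X)_{\mathbb{Q}})$ is exactly $E^{\times}=T_{E}(\mathbb{Q})$, so $T_{E}$ is the centralizer of $E$ inside $GL(T(X)_{\mathbb{Q}})$ as a $\mathbb{Q}$-algebraic group. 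Because the Mumford--Tate group commutes with every Hodge endomorphism (equivalently ${\rm End}_{\rm Hdg}(T(X)_{\mathbb{Q}})={\rm End}_{G}(T(X)_{\mathbb{Q}})$), we obtain $G\subset T_{E}$; being connected, commutative and consisting of semisimple elements, $G$ is then a subtorus of $T_{E}$. Subtori of $T_{E}$ correspond bijectively to ${\rm Gal}(\overline{\mathbb{Q}}/\mathbb{Q})$-stable saturated sublattices of the cocharacter lattice $X_{*}(T_{E})=\bigoplus_{\tau}\mathbb{Z}e_{\tau}$, indexed by $\tau\in{\rm Hom}(E,\mathbb{C})$, and by the defining property of the Mumford--Tate group $X_{*}(G)$ is the smallest such lattice containing the Hodge cocharacter $\mu_{h}$.

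Next I would compute $\mu_{h}$. The embedding $\sigma_{X}$ of (2.3) is by construction the one through which $E$ acts on the line $T(X)^{2,0}_{\mathbb{Q}}=\mathbb{C}\omega$; write $\tau_{0}=\sigma_{X}$. The Hodge decomposition has ${\rm dim}\,T^{2,0}={\rm dim}\,T^{0,2}=1$, with $T^{0,2}=\overline{T^{2,0}}$ the $\overline{\tau_{0}}$-eigenline and the remaining eigenlines of type $(1,1)$. Reading off the cocharacter of $h$, and working with the special Mumford--Tate (Hodge) group so that the central weight torus drops out (equivalently passing to the weight-$0$ Tate twist $T(X)_{\mathbb{Q}}(1)$), its primitive part is $\mu_{h}=e_{\tau_{0}}-e_{\overline{\tau_{0}}}$.

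Then comes the orbit computation, which I expect to be the crux. Complex conjugation commutes with the Galois action on ${\rm Hom}(E,\mathbb{C})$, so $\sigma(\overline{\tau})=\overline{\sigma(\tau)}$ for all $\sigma,\tau$, and moreover ${\rm Gal}(\overline{\mathbb{Q}}/\mathbb{Q})$ acts transitively on ${\rm Hom}(E,\mathbb{C})$. Hence the Galois orbit of $\mu_{h}$ is $\{e_{\tau}-e_{\overline{\tau}}:\tau\in{\rm Hom}(E,\mathbb{C})\}$, whose $\mathbb{Z}$-span is already the saturated lattice $\bigoplus_{i}\mathbb{Z}(e_{\tau_{i}}-e_{\overline{\tau_{i}}})$, one generator per conjugate pair. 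On the other hand, the norm ${\rm Nm}:T_{E}\to T_{F}:={\rm Res}_{F/\mathbb{Q}}\mathbb{G}_{m,F}$ is induced by ${\rm Nm}_{E/F}$ and sends $e_{\tau}\mapsto f_{\tau|_{F}}$ on cocharacters, so its kernel lattice is $\{\sum a_{\tau}e_{\tau}:a_{\tau}+a_{\overline{\tau}}=0\}=\bigoplus_{i}\mathbb{Z}(e_{\tau_{i}}-e_{\overline{\tau_{i}}})$. Matching the two computations gives $X_{*}(G)=X_{*}(\ker{\rm Nm})$, and since $\ker({\rm Nm})$ is connected (over $\mathbb{C}$ it is $\{(s,s^{-1})\}\cong\prod_{i}\mathbb{G}_{m}$), two subtori of $T_{E}$ with equal cocharacter lattices coincide, so $G=\ker({\rm Nm})$.

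The main obstacle is this orbit/lattice step: I must verify that conjugation is compatible with the Galois action, so that the orbit of $\mu_{h}$ stays inside the norm-one lattice, and that transitivity forces the orbit to span the \emph{whole} norm-one lattice rather than a proper (finite-index) sublattice after saturation. A convenient independent check on the inclusion $G\subset\ker{\rm Nm}$ is provided by the cup-product polarization $\psi$ on $T(X)_{\mathbb{Q}}$, whose adjoint involution on $E$ is complex conjugation $\overline{(-)}$: any $g\in E^{\times}$ preserving $\psi$ satisfies $g\overline{g}={\rm Nm}_{E/F}(g)=1$, so $G\subset\ker{\rm Nm}$, while the reverse inclusion is exactly the content of the transitivity-and-saturation argument above.
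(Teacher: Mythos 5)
Your proof is sound, but note that there is nothing in the paper to compare it against: the paper states this result as Zarhin's and cites \cite[Theorem 2.3.1]{Zarhin} without reproducing any argument, so you have supplied a proof where the paper supplies only a citation. Your route is the standard toric one: the hypothesis ${\rm dim}_{E}T(X)_{\mathbb{Q}}=1$ identifies the centralizer of $E$ in $GL(T(X)_{\mathbb{Q}})$ with $T_{E}={\rm Res}_{E/\mathbb{Q}}\mathbb{G}_{m,E}$; minimality of the Mumford--Tate group places $G$ inside $T_{E}$ as a subtorus; and the equality is then read off from cocharacter lattices, since $X_{*}(G)$ is the saturation of the $\mathbb{Z}$-span of the Galois orbit of $\mu_{h}=e_{\tau_{0}}-e_{\overline{\tau_{0}}}$, which coincides with $\ker\bigl(X_{*}(T_{E})\to X_{*}(T_{F})\bigr)$; both subtori are connected, so they agree. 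Compared with the bare citation, this buys a self-contained argument that makes visible exactly where each CM hypothesis enters, and it is essentially how such statements are proved in the literature.

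Two steps deserve to be nailed down rather than asserted. First, the identity $\sigma(\overline{\tau})=\overline{\sigma(\tau)}$, which you correctly single out as the crux, is not a formal property of conjugation: ${\rm Gal}(\overline{\mathbb{Q}}/\mathbb{Q})$ is nonabelian, so $c\circ\sigma\neq\sigma\circ c$ in general, and the identity genuinely fails for totally imaginary fields that are not CM. It holds here precisely because $E$ is CM: complex conjugation on $E$ is induced by a field automorphism $\iota\in{\rm Gal}(E/F)$ satisfying $c\circ\tau=\tau\circ\iota$ for \emph{every} embedding $\tau$, whence $\sigma(\overline{\tau})=\sigma\circ\tau\circ\iota=\overline{\sigma(\tau)}$; this one line is what discharges your ``main obstacle,'' and without it the orbit computation has no basis. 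Second, your passage to the Hodge group (equivalently to the Tate twist $T(X)_{\mathbb{Q}}(1)$) is a necessity, not a convenience: for the weight-two structure the Mumford--Tate group contains the homotheties, which do not satisfy $x\overline{x}=1$, so the theorem is only correct under the convention $T(X_{\mathbb{C}})\subset H^{2}(X_{\mathbb{C}},\mathbb{Z}(1))$ used in the paper's introduction, not the weight-two convention of Section 2.2. With these two points made explicit, your argument is complete; the remaining ingredients (the Mumford--Tate group as the smallest $\mathbb{Q}$-subgroup through which $\mu_{h}$ factors, and the dictionary between subtori of $T_{E}$ and saturated Galois-stable sublattices of $X_{*}(T_{E})$) are standard and fine to quote, e.g.\ from \cite{dmos}.
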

    Hence the rational points of $G$ are given by 
    \begin{align}
        G(\mathbb{Q})=\{ x\in E^{\times} | x\overline{x}=1\},\notag
    \end{align}
    where $\overline{(-)}$ denotes the complex conjugation $E\to E$.
    \section{The proof of Theorem \ref{main theorem}}
    \subsection{The main theorem of complex multiplication by Rizov}
    \indent Suppose that $X$ has CM over a number field $k$. Let $\mathbb{A}_{f}: = (\prod_{l}\mathbb{Z}_{l})\otimes_{\mathbb{Z}}\mathbb{Q}$ be the ring of finite adeles of $\mathbb{Q}$. Fix an embedding $\overline{k}\to \mathbb{C}$. Then we have an isomorphism
    \begin{align}
        H^{2}(X_{\overline{k}}, \mathbb{Z}_{l}(1))\cong H^{2}(X_{\mathbb{C}}, \mathbb{Z}(1))\otimes_{\mathbb{Z}}\mathbb{Z}_{l}. \notag
    \end{align}
    Hence
    \begin{align}
        (\prod_{l}H^{2}(X_{\overline{k}}, \mathbb{Z}_{l}(1))\otimes_{\mathbb{Z}}\mathbb{Q}\cong H^{2}(X_{\mathbb{C}}, \mathbb{Z}(1))\otimes_{\mathbb{Z}} \mathbb{A}_{f},\notag
    \end{align}
and
\begin{align}
    (\prod_{l}T_{l}(X_{\overline{k}}))\otimes_{\mathbb{Z}}\mathbb{Q}\cong T(X_{\mathbb{C}})\otimes_{\mathbb{Z}} \mathbb{A}_{f}. \notag
\end{align}
    Thus we obtain a Galois representation $\rho : {\rm Gal}(\overline{k}/k)\to GL(T(X_{\mathbb{C}})_{\mathbb{Q}})(\mathbb{A}_{f})$ whose image is contained in $ G(\mathbb{A}_{f})$
   \\
   \indent To construct Hecke characters $\mathbb{A}_{k}^{\times}\to \mathbb{C}^{\times}$, we first recall the main theorem of CM for $X_{\mathbb{C}}$ by Rizov\cite{Rizov10}. This gives a description of $\rho$ via class field theory. Let $E$ be the reflex field of $X_{\mathbb{C}}$. Since $X$ has CM over $k$, we have $E\subset k$ by definition $\ref{def field}$. Since $E$ is a CM field, complex conjugation $\overline{(-)}$ is an automorphism of $E$.
Hence it induces an involution $\overline{(-)} : \mathbb{A}^{\times}_{E}\to \mathbb{A}^{\times}_{E}$.
\\ $\indent$The main theorem of complex multiplication was proved by Rizov \cite{Rizov10}. The following statement follows from  \cite[Theorem 2.1]{Ito}. The proof is given  in \cite[Corollary 4.4]{mp}.
\begin{thm}\label{risov} The action of ${\rm Gal}(\overline{k}/k)$ on $T(X_{\mathbb{C}})\otimes\mathbb{A}_{f}$ factors through a continuous homomorphism
\begin{align}
   \rho^{ab} : {\rm Gal}(k^{ab}/k)\to G(\mathbb{A}_{f})=\{x\in \mathbb{A}^{\times}_{E, f} | x\overline{x}=1\},\notag
\end{align}
and the following diagram commutes:
\[\xymatrix{\mathbb{A}^{\times}_{k}\ar[rr]^{{\rm art}_{k}}\ar[d]_{{\rm Nm}_{k/E}}&& {\rm Gal}(k^{ab}/k)\ar[rr]^{\rho^{ab}} && G(\mathbb{A}_{f})\ar[d]^{quotient} \\ \mathbb{A}^{\times}_{E}\ar[rr]_{proj}&& \mathbb{A}^{\times}_{E, f}\ar[rr]_{\overline{(-)}/(-)}&& G(\mathbb{A}_{f})/G(\mathbb{Q}).}
\]
\end{thm}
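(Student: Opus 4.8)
The plan is to derive this reciprocity law from the theory of canonical models of Shimura varieties, combined with the Kuga-Satake construction, thereby reducing it to the classical main theorem of complex multiplication for CM abelian varieties (Shimura-Taniyama, in its adelic reformulation). The underlying principle is that a CM K3 surface is a special point of an orthogonal Shimura variety, and the Kuga-Satake morphism realizes its transcendental cohomology inside the cohomology of a CM abelian variety, for which the Galois action is already understood.

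First I would attach to the K3 lattice the orthogonal Shimura datum $(\mathrm{SO}(L_{\mathbb{Q}}), \Omega)$, so that $X_{\mathbb{C}}$ determines a period point $h_X \in \Omega$ whose Mumford-Tate group is exactly $G = \mathrm{MT}(T(X)_{\mathbb{Q}})$. Since $X$ has CM, $G$ is a torus, hence $h_X$ is a special point. The Kuga-Satake construction supplies a morphism of Shimura data into a Siegel datum carrying $h_X$ to the CM point attached to the Kuga-Satake abelian variety $A$, so that specialness of $h_X$ forces $A$ to be of CM type. Because $G$ is a torus (so $G(\mathbb{A}_f)$ is commutative) and the Galois action on $T(X_{\mathbb{C}}) \otimes \mathbb{A}_f$ preserves both the $E$-module structure and the cup-product pairing, its image lies in $G(\mathbb{A}_f) = \{x \in \mathbb{A}^{\times}_{E,f} \mid x\overline{x}=1\}$ and the representation factors through $\mathrm{Gal}(k^{ab}/k)$, producing $\rho^{ab}$.

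Second, I would apply the reciprocity law for canonical models at special points: for $h_X$ with associated torus $G$ and reflex field $E(h_X)$, the automorphism $\mathrm{art}_k(s)$ (for $s \in \mathbb{A}^{\times}_k$, $k \supset E(h_X)$) acts on the adelic realization through the reflex norm applied to $\mathrm{Nm}_{k/E(h_X)}(s)$. By Valloni's result the reflex field is $E = \sigma_X(E_X)$, and by Zarhin's theorem $G = \ker(\mathrm{Nm}: \mathrm{Res}_{E/\mathbb{Q}}\mathbb{G}_m \to \mathrm{Res}_{F/\mathbb{Q}}\mathbb{G}_m)$, the norm-one torus. It then remains to evaluate the reflex norm into this torus. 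Unwinding its definition from the CM type $\Phi$ of $T(X)_{\mathbb{Q}}$—which is of K3 type, so $\Phi \sqcup \overline{\Phi} = \mathrm{Hom}(E, \mathbb{C})$—I would show that, modulo $G(\mathbb{Q})$, the reflex norm is the map $y \mapsto \overline{y}/y$, which indeed lands in $G$ since $(\overline{y}/y)\overline{(\overline{y}/y)} = 1$. Composing with $\mathrm{Nm}_{k/E}$ on the source then reproduces exactly the bottom row $\mathrm{proj}$ followed by $\overline{(-)}/(-)$ of the asserted diagram.

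The hard part is the reciprocity law of the second step, which is the genuinely deep input. It requires that the Kuga-Satake abelian variety and the Kuga-Satake correspondence descend to the reflex field $E$ and are compatible with the Galois action on $l$-adic cohomology, so that the transcendentally defined embedding of $T(X)$ into $\mathrm{End}(H^1(A))$ matches the étale-cohomological one; this is precisely why the hypothesis $k \supset E$ is needed, ensuring the whole construction is defined over $k$ with Galois-equivariance. Establishing this algebraicity and Galois-equivariance of the Kuga-Satake class—rather than the formal reflex-norm bookkeeping of the third step—is where the real difficulty lies, and it is exactly the content proved by Rizov and by Madapusi Pera.
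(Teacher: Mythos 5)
The paper does not actually prove this theorem itself: it is imported from the literature (the statement follows K.~Ito's Theorem 2.1, with the proof attributed to Rizov and to Madapusi Pera, Corollary 4.4). Your sketch is an accurate outline of exactly that proof --- the CM K3 as a special point of the orthogonal Shimura datum, the Kuga-Satake transfer to a CM abelian variety governed by Shimura-Taniyama, the canonical-model reciprocity law at special points, and the reflex-norm computation $y \mapsto \overline{y}/y$ for the norm-one torus $G$ modulo the $G(\mathbb{Q})$-ambiguity that accounts for the quotient on the right of the diagram --- and you correctly locate the genuinely deep input (algebraicity and Galois-equivariance of the Kuga-Satake correspondence) in precisely the references the paper cites, so your proposal takes essentially the same route as the paper's source.
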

\begin{rmk}\label{ssrho}
    By Theorem \ref{risov}, the image of $\rho$ is contained in $\mathbb{A}_{E, f}^{\times}$. Since $T(X_{\mathbb{C}})_{\mathbb{A}_{f}}$ is a free $\mathbb{A}_{E, f}$-module of rank one, it follows that $\rho$ is semisimple.
\end{rmk}
\begin{rmk}(Valloni \cite{val22})\label{val}
      \\$\indent$In the situation of Theorem \ref{risov}, let $t\in \mathbb{A}^{\times}_{k}$  and put $s:={\rm Nm}_{k/E}(t)\in \mathbb{A}^{\times}_{E, f}.$ Then there exists a unique $u\in G(\mathbb{Q})$ such that 
      \begin{align}
          \rho^{ab}({\rm art}_k(t))=u\cdot \overline{s}/s\in G(\mathbb{A}_{f}).\notag
      \end{align}
  \end{rmk}
The element $\overline{s}/s$ has the following property:
  \begin{prop}\label{lcomp} Let $v$ be a finite place of $k$, and let $t\in {\rm Im}(\mathscr{O}_{{p}_{v}}^{\times}\to \mathbb{A}^{\times}_{k})$. Put $s := {\rm Nm}_{k/E}(t)$. Under the isomorphism $\prod_{l}T_{l}(X_{\overline{k}})\otimes \mathbb{Q}\cong T(X_{\mathbb{C}})\otimes \mathbb{A}_{f}$, for $(l, \mathfrak{p}_{v})=1$, the $l$-th component of $ \overline{s}/s\in GL(T(X_{\mathbb{C}}))(\mathbb{A}_{f})$ is equal to $1$.
  \end{prop}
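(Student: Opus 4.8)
The plan is to track the support of the idele $\overline{s}/s$ place by place over $E$, and then to translate the fact that this support avoids the residue characteristic of $v$ into the triviality of the $l$-th component of the associated automorphism for $l$ coprime to $\mathfrak{p}_{v}$.

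First I would pin down where $s={\rm Nm}_{k/E}(t)$ is supported. Since $t$ lies in the image of $\mathscr{O}_{\mathfrak{p}_{v}}^{\times}\hookrightarrow \mathbb{A}^{\times}_{k}$, it is the idele whose $v$-component is some unit $u\in \mathscr{O}_{\mathfrak{p}_{v}}^{\times}$ and whose component at every other place is $1$. Write $w:=v|_{E}$ for the place of $E$ below $v$ and let $p$ be the rational prime beneath $\mathfrak{p}_{v}$. The local description of the idelic norm gives $({\rm Nm}_{k/E}(t))_{w'}=\prod_{v'|w'}{\rm Nm}_{k_{v'}/E_{w'}}(t_{v'})$; because $t_{v'}=1$ for $v'\neq v$, the idele $s$ is trivial at every place of $E$ other than $w$, and $s_{w}={\rm Nm}_{k_{v}/E_{w}}(u)$. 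In particular $s$ is supported at the single place $w$, and $w$ lies over $p$.

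Next I would apply the complex conjugation $\overline{(\cdot)}:\mathbb{A}^{\times}_{E}\to \mathbb{A}^{\times}_{E}$, which is induced by the nontrivial automorphism $c$ of $E/F$ and, on ideles, carries a component at a place $w'$ to the conjugate place $\overline{w'}$. Since $c$ fixes $\mathbb{Q}$, the place $\overline{w}$ again lies over $p$. Hence $\overline{s}$ is supported at $\overline{w}$, and therefore $\overline{s}/s$ is supported only at places of $E$ dividing $p$. Finally I would convert this into the claim about $GL(T(X_{\mathbb{C}}))(\mathbb{A}_{f})$: the CM hypothesis ${\rm dim}_{E}T(X_{\mathbb{C}})_{\mathbb{Q}}=1$ makes $T(X_{\mathbb{C}})_{\mathbb{Q}}$ a one-dimensional $E$-vector space, so $T(X_{\mathbb{C}})\otimes \mathbb{A}_{f}\cong \mathbb{A}_{E,f}$ as $\mathbb{A}^{\times}_{E,f}$-modules and $G(\mathbb{A}_{f})\subset \mathbb{A}^{\times}_{E,f}$ acts by multiplication. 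Under $E\otimes \mathbb{Q}_{l}=\prod_{w'|l}E_{w'}$, the $l$-th component of this action is multiplication by the $l$-part $(\overline{s}/s)_{l}\in (E\otimes\mathbb{Q}_{l})^{\times}=\prod_{w'|l}E_{w'}^{\times}$. For $l$ with $(l,\mathfrak{p}_{v})=1$, that is $l\neq p$, no place $w'$ over $l$ lies in the support of $\overline{s}/s$, so $(\overline{s}/s)_{l}=1$ and the $l$-th component is the identity, as asserted.

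I expect the main obstacle to be the last bookkeeping step: one must verify carefully that the ``$l$-th component'' of an element of $GL(T(X_{\mathbb{C}}))(\mathbb{A}_{f})$ corresponds exactly to the collection of $w'$-components for $w'|l$ of the corresponding idele in $\mathbb{A}^{\times}_{E,f}$. This rests on identifying $T(X_{\mathbb{C}})\otimes \mathbb{Q}_{l}$ with $\prod_{w'|l}E_{w'}$ and on the compatibility of the $G(\mathbb{A}_{f})$-action with the $E$-module structure supplied by Theorem \ref{risov}; once this identification is in hand, the conclusion is immediate from the support computation.
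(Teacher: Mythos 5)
Your proposal is correct and follows essentially the same route as the paper's proof: both use the local description of the idelic norm to see that $s$ (and hence $\overline{s}$, since conjugation permutes places over the same rational prime) is supported only at places of $E$ above the residue characteristic of $\mathfrak{p}_{v}$, so that $(\overline{s}/s)_{w}=1$ for every $w\mid l$. Your extra care in spelling out the identification of the $l$-th component of $GL(T(X_{\mathbb{C}}))(\mathbb{A}_{f})$ with $\prod_{w\mid l}E_{w}^{\times}$ via the rank-one $E$-module structure on $T(X_{\mathbb{C}})_{\mathbb{Q}}$ is a point the paper leaves implicit, but it is the same argument.
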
 
  \begin{proof}
  The $l$-th component of $\overline{s}/s$ is given by $\prod_{w|l}(\overline{s}/s)_{w}$. On the other hand, by the definition of the  norm, the $w$-th component of $s\in \mathbb{A}^{\times}_{E,f}$ is given by 
     \begin{align}
         \prod_{v^{'}|w}N_{k_{v^{'}}/E_{w}}(t_{v^{'}}). \notag
     \end{align}
    Fix $w|l$. If  $v|w$, then $v|l$, which is a contradiction. Hence, by assumption, we have $s_{w}=1$ . Similarly, $\overline{s}_{w}=1$. Since $w$ was arbitrary finite place of $E$ with $w|l$, it follows that $(\overline{s}/s)_{l}=1.$
  \end{proof}
    \subsection{Construction of Hecke characters}
  $\indent$  The goal of this section is to prove Theorem $\ref{main theorem}$. First, we prove the following lemma from algebraic number theory.
    \begin{lemma}
        Let $E$ be a CM field. Then the group
        \begin{align}G(\mathbb{Q})=\{ x\in E^{\times} | ~x\overline{x}=1\}\notag
        \end{align}is a discrete subgroup of $\mathbb{A}^{\times}_{E, f}$
    \end{lemma}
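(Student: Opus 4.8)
The plan is to realize $G(\mathbb{Q})$ as the finite-adelic shadow of a subgroup of $E^{\times}$ whose archimedean part is forced to be \emph{bounded}, and then to invoke the discreteness of $E^{\times}$ inside the full idele group $\mathbb{A}^{\times}_{E}$.

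First I would record the archimedean constraint imposed by the relation $x\overline{x}=1$. Since $E$ is a CM field, its complex conjugation $\overline{(\,\cdot\,)}$ is compatible with every embedding: for each $\tau\in\mathrm{Hom}(E,\mathbb{C})$ one has $\tau(\overline{x})=\overline{\tau(x)}$. Hence $x\overline{x}=1$ yields $|\tau(x)|^{2}=\tau(x)\overline{\tau(x)}=\tau(x\overline{x})=1$, so that $|\tau(x)|=1$ for every $\tau$. Consequently, under the diagonal embedding $E^{\times}\hookrightarrow \mathbb{A}^{\times}_{E}=\mathbb{A}^{\times}_{E,f}\times E_{\infty}^{\times}$, where $E_{\infty}:=E\otimes_{\mathbb{Q}}\mathbb{R}$, every element of $G(\mathbb{Q})$ has its archimedean component lying in the set $K_{\infty}:=\{y\in E_{\infty}^{\times}\mid |\tau(y)|=1 \text{ for all } \tau\}$. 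Because $E$ is totally imaginary, $E_{\infty}^{\times}\cong (\mathbb{C}^{\times})^{[E:\mathbb{Q}]/2}$, and $K_{\infty}$ is the corresponding product of unit circles, hence \emph{compact}.

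Next I would combine two standard facts: that $E^{\times}$ is a \emph{discrete} subgroup of $\mathbb{A}^{\times}_{E}$, and that the diagonal map $E^{\times}\to \mathbb{A}^{\times}_{E,f}$ is injective. Fix a compact neighbourhood $C$ of $1$ in $\mathbb{A}^{\times}_{E,f}$. Any element of $G(\mathbb{Q})$ whose finite component lies in $C$ automatically has archimedean component in $K_{\infty}$, so it lies in the compact set $C\times K_{\infty}\subset \mathbb{A}^{\times}_{E}$. As $G(\mathbb{Q})\subset E^{\times}$ is discrete in $\mathbb{A}^{\times}_{E}$, the intersection $G(\mathbb{Q})\cap(C\times K_{\infty})$ is finite, so only finitely many elements of $G(\mathbb{Q})$ project into $C$. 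Shrinking $C$ to a neighbourhood of $1$ avoiding these finitely many nonidentity points (using that $\mathbb{A}^{\times}_{E,f}$ is Hausdorff) shows that $1$ is isolated in the image of $G(\mathbb{Q})$; since $G(\mathbb{Q})$ is a subgroup and discreteness of a subgroup of a topological group is detected at the identity, this gives the claim.

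The routine inputs are the compactness of $K_{\infty}$ and the discreteness of $E^{\times}$ in $\mathbb{A}^{\times}_{E}$, both classical. The one genuinely load-bearing step is the compatibility $\tau\circ\overline{(\,\cdot\,)}=\overline{(\,\cdot\,)}\circ\tau$, i.e. that the abstract conjugation on the CM field $E$ agrees with complex conjugation under each embedding; this is precisely what converts the algebraic condition $x\overline{x}=1$ into the topological statement that the infinite components are confined to a compact torus, which is the crux of the whole argument.
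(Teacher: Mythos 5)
Your proof is correct, but it takes a genuinely different route from the paper's. The paper stays entirely inside the finite ideles: it intersects $G(\mathbb{Q})$ with the open set $\prod_{v}\mathscr{O}_{v}$ of everywhere-integral elements, notes that any integral $x\in G(\mathbb{Q})$ is a unit (since $\overline{x}=x^{-1}$), and then invokes Dirichlet's unit theorem --- for a CM field the units $\mathscr{O}_{F}^{\times}$ of the maximal totally real subfield $F$ have finite index in $\mathscr{O}_{E}^{\times}$, while $G(\mathbb{Q})\cap\mathscr{O}_{F}=\{\pm 1\}$ --- to conclude that this intersection is finite; Hausdorff separation then isolates the identity. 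You instead pass to the full idele group: the CM compatibility $\tau\circ\overline{(\cdot)}=\overline{(\cdot)}\circ\tau$ traps the archimedean components of $G(\mathbb{Q})$ in the compact torus $K_{\infty}$, so any element whose finite part lies in a compact neighbourhood $C$ lies in the compact set $C\times K_{\infty}$, and the discreteness of $E^{\times}$ in $\mathbb{A}^{\times}_{E}$ forces finiteness, after which you separate points exactly as the paper does. Both arguments thus reduce to finiteness near the identity, but through different classical inputs: the paper through the equality of unit ranks of $E$ and $F$ (Dirichlet), you through compactness of the real points of the norm-one torus together with the product-formula discreteness of $E^{\times}$ in $\mathbb{A}^{\times}_{E}$. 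Your version is the more conceptual one --- it is the special case of the general principle that a $\mathbb{Q}$-torus $G$ with $G(\mathbb{R})$ compact has $G(\mathbb{Q})$ discrete in $G(\mathbb{A}_{f})$ --- while the paper's is more elementary in that it never leaves the finite ideles. One small point you should make explicit: to deduce that $G(\mathbb{Q})\cap(C\times K_{\infty})$ is finite you need this set to be closed as well as discrete (a discrete subset of a compact set can be infinite, e.g.\ $\{1/n\}\subset[0,1]$); this is harmless here because a discrete subgroup of a Hausdorff topological group is automatically closed, but the sentence as written skips that step.
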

    \begin{proof}
        Let $F$ be the maximal totally real field of $E$. By Dirichlet's unit theorem, the group of units $\mathscr{O}_{F}^{\times}$ has a finite index in $\mathscr{O}_{E}^{\times}$ since $E$ is a CM field. Moreover, under the natural inclusion $E\to \mathbb{A}^{\times}_{E, f}$, we have 
        \begin{align}
            \mathscr{O}_{E}=E\cap (\prod_{v}\mathscr{O}_{v}).\notag
        \end{align}
        Since $\prod_{v}\mathscr{O}_{v}$ is open in $\mathbb{A}^{\times}_{E, f}$, it follows that  $\mathscr{O}_{E}$ is  open in $E$. Hence  $\mathscr{O}_{F}$ is also open in $E$. On the other hand, $G(\mathbb{Q})\cap \mathscr{O}_{F}=\{\pm 1\}$. Therefore, since $\mathbb{A}^{\times}_{E, f}$ is Hausdorff, it follows that $G(\mathbb{Q})$ is a discrete subgroup.
    \end{proof}
    \begin{prop}\label{hecke1}
        Suppose that $X$ is a K3 surface with CM over a number field $k\subset \mathbb{C}$. Let $E$ be the reflex field. Then there exists a unique locally constant homomorphism $ u: \mathbb{A}^{\times}_{k}\to E^{\times}$ satisfying the following condition: 
        \\ For all $t\in \mathbb{A}^{\times}_{k}$, 
        \begin{align}
            \rho^{ab}({\rm art}_k(t))=u(t)\overline{{\rm Nm}_{k/E}(t)}/{\rm Nm}_{k/E}(t)\in G(\mathbb{A}_{f})\notag.
        \end{align}
    \end{prop}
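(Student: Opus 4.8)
The plan is to read the function $u$ directly off Remark \ref{val}, and then to verify separately the three things that are not immediate: that $u$ is a group homomorphism, that it is locally constant, and that it is unique. For each $t\in\mathbb{A}^{\times}_{k}$, set $s:=\mathrm{Nm}_{k/E}(t)$ viewed in $\mathbb{A}^{\times}_{E,f}$ as in Remark \ref{val}. That Remark produces a \emph{unique} element of $G(\mathbb{Q})=\{x\in E^{\times}\mid x\overline{x}=1\}\subset E^{\times}$, which I will call $u(t)$, such that $\rho^{ab}(\mathrm{art}_{k}(t))=u(t)\,\overline{s}/s$ in $G(\mathbb{A}_{f})$. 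This already defines a function $u:\mathbb{A}^{\times}_{k}\to E^{\times}$ with the required defining identity, and it hands us uniqueness for free: any two functions obeying the identity agree value-by-value after cancelling the common factor $\overline{s}/s$ in the group $G(\mathbb{A}_{f})$.

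Next I would check the homomorphism property. Solving the defining identity gives $u(t)=\rho^{ab}(\mathrm{art}_{k}(t))\cdot(\overline{s}/s)^{-1}$ inside $G(\mathbb{A}_{f})\subset\mathbb{A}^{\times}_{E,f}$. The maps $\mathrm{art}_{k}$, $\rho^{ab}$, and $\mathrm{Nm}_{k/E}$ followed by the projection to the finite part are all group homomorphisms, so for $t_{1},t_{2}\in\mathbb{A}^{\times}_{k}$ with $s_{i}:=\mathrm{Nm}_{k/E}(t_{i})$ one finds $u(t_{1}t_{2})=\rho^{ab}(\mathrm{art}_{k}(t_{1}))\rho^{ab}(\mathrm{art}_{k}(t_{2}))\cdot(\overline{s_{1}s_{2}}/(s_{1}s_{2}))^{-1}=u(t_{1})u(t_{2})$. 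The rearrangement of factors here is legitimate precisely because $G(\mathbb{A}_{f})$, being a subgroup of the abelian group $\mathbb{A}^{\times}_{E,f}$, is commutative; this is the only place commutativity of $G(\mathbb{A}_{f})$ is needed.

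The last and genuinely substantial point is local constancy, and it is where the preceding Lemma on the discreteness of $G(\mathbb{Q})$ enters. The idea is that the expression $t\mapsto\rho^{ab}(\mathrm{art}_{k}(t))\cdot(\overline{s}/s)^{-1}$ is continuous as a map $\mathbb{A}^{\times}_{k}\to\mathbb{A}^{\times}_{E,f}$: indeed $\rho^{ab}\circ\mathrm{art}_{k}$ is continuous, the norm, the projection, the conjugation $\overline{(-)}$, inversion, and multiplication are all continuous, and $\mathbb{A}^{\times}_{E,f}$ is a topological group. Thus $u$ is a continuous map whose image lies in $G(\mathbb{Q})$, which by the previous Lemma is a \emph{discrete} subgroup of $\mathbb{A}^{\times}_{E,f}$. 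A continuous map into a topological group whose values all lie in a discrete subgroup is automatically locally constant: around any $t_{0}$ one chooses an open $U\subset\mathbb{A}^{\times}_{E,f}$ with $U\cap G(\mathbb{Q})=\{u(t_{0})\}$, and then $u^{-1}(U)$ is an open neighbourhood of $t_{0}$ on which $u$ takes the constant value $u(t_{0})$. I expect this discreteness step to be the main obstacle, since it is exactly where the CM hypothesis is used (through the preceding Lemma, which itself rests on Dirichlet's unit theorem for the CM field $E$); by contrast the homomorphism and uniqueness assertions are formal consequences of Remark \ref{val} together with the commutativity of $G(\mathbb{A}_{f})$.
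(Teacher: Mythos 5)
Your proposal is correct and follows essentially the same route as the paper: define $u$ pointwise via the uniqueness statement in Remark \ref{val}, deduce the homomorphism property by formal manipulation in the abelian group $G(\mathbb{A}_{f})$, and obtain local constancy from continuity together with the preceding Lemma that $G(\mathbb{Q})$ is discrete in $\mathbb{A}^{\times}_{E,f}$. In fact your write-up is slightly more complete than the paper's, which only asserts continuity of $u$ and leaves the passage from continuity plus discreteness to local constancy implicit, whereas you spell out that step (and the uniqueness) explicitly.
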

    \begin{proof}
    Under the notation of Remark $\ref{val}$, the correspondence $t\to u$ defines a map from $\mathbb{A}^{\times}_{k}$ to $G(\mathbb{Q})=\{ x\in E^{\times} | ~x\overline{x}=1\}$, which we  denote by $u$. The function $u$ is continuous, since it is defined by a functional equation involving the nonzero continuous functions $\rho^{ab}$ and $ {\rm Nm}_{k/E}$. Next, we show that $u$ is a group homomorphism. Let $t_{1}, t_{2}\in \mathbb{A}^{\times}_{k}$. By Theorem $\ref{risov}$, we have
    \begin{align}
        \rho^{ab}({\rm art}_{k}(t_{1}){\rm art}_{k}(t_{2}))&=\rho^{ab}({\rm art}_{k}(t_{1}))\rho^{ab}({\rm art}_{k}(t_{2}))\notag
        \\&=u(t_{1})\overline{{\rm Nm}_{k/E}(t_{1})}{\rm Nm}_{k/E}(t_{1})^{-1}u(t_{2})\overline{{\rm Nm}_{k/E}(t_{2})}{\rm Nm}_{k/E}(t_{2})^{-1}\notag
        \\&=u(t_{1})u(t_{2})\overline{{\rm Nm}_{k/E}(t_{1}t_{2})}{\rm Nm}_{k/E}(t_{1}t_{2})^{-1}. \notag\end{align}
       \indent On the other hand, since the Artin map ${\rm art}_{k} : \mathbb{A}^{\times}_{k}\to {\rm Gal}(k^{ab}/k)$ is a homomorphism, we have ${\rm art}_{k}(t_{1}){\rm art}_{k}(t_{2})={\rm art}_{k}(t_{1}t_{2})$. Hence, by uniqueness, we obtain $u(t_{1}t_{2})=u(t_{1})u(t_{2})$. Therefore, $u$ is a homomorphism from $\mathbb{A}^{\times}_{k}$ to $G(\mathbb{Q})$.
    \end{proof}
    Let $\tau : E\to \mathbb{C}$ be an inclusion. We define a character $\chi_{\tau} : \mathbb{A}_{k}^{\times}\to \mathbb{C}^{\times}$ by
    \begin{align}
        \chi_{\tau}(t):=\left(u(t)\cdot \overline{{\rm Nm}_{k/E}(t)}/{\rm Nm}_{k/E}(t)\right)_{\tau},\notag
    \end{align}
    where $(-)_{\tau}$ denotes the $\tau$-component in $\mathbb{A}^{\times}_{E}$.
\begin{thm}
(i) The characters $\chi_{\tau}$ are algebraic Hecke characters of $k$.
\\(ii)
     Let $l$ be a prime, and let $\rho_{l} : {\rm Gal}(\overline{k}/k)\to GL(T_{l}(X_{\overline{k}})\otimes\mathbb{Q}_{l}))$ be the l-adic Galois representation. Fix a prime ideal $\mathfrak{p}\subset k$ such that $(l, \mathfrak{p})=1$. Then the following conditions are equivalent: 
     \\(a) The $l$-adic representation $\rho_{l}$ is unramified at $\mathfrak{p}$.
     \\(b) The Hecke character $\chi_{\tau}$ is unramified at $\mathfrak{p}$, i.e., if $\mathscr{O}_{\mathfrak{p}}$ is the ring of integers of $k_{\mathfrak{p}}$, then $\chi_{\tau}(\mathscr{O}^{\times}_{\mathfrak{p}})=1.$
    \end{thm}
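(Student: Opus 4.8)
The plan is to establish (i) by checking in turn the three defining properties of an algebraic Hecke character, and then to deduce (ii) from class field theory together with Proposition~\ref{lcomp}. Continuity and the homomorphism property of $\chi_{\tau}$ are immediate: by Proposition~\ref{hecke1} the map $u$ is a continuous homomorphism, while $\mathrm{Nm}_{k/E}$, the conjugation $\overline{(-)}$, and the $\tau$-projection are continuous homomorphisms as well, so the composite $\chi_{\tau}$ is a continuous homomorphism $\mathbb{A}^{\times}_{k}\to\mathbb{C}^{\times}$. To see that $\chi_{\tau}$ is trivial on $k^{\times}$, I would fix $a\in k^{\times}$ embedded diagonally. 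By Theorem~\ref{class 2}(i) we have $\mathrm{art}_{k}(a)=\mathrm{id}$, so the defining relation of $u$ in Proposition~\ref{hecke1} gives $u(a)\,\overline{\mathrm{Nm}_{k/E}(a)}/\mathrm{Nm}_{k/E}(a)=1$ in $G(\mathbb{A}_{f})\subset\mathbb{A}^{\times}_{E,f}$. Here $u(a)$ and $\mathrm{Nm}_{k/E}(a)=N_{k/E}(a)$ are global elements of $E^{\times}$, so the displayed element lies in $E^{\times}$; since $E^{\times}\hookrightarrow\mathbb{A}^{\times}_{E,f}$ is injective it already equals $1$ in $E^{\times}$, hence its image in the full idele group $\mathbb{A}^{\times}_{E}$ is trivial and $\chi_{\tau}(a)=1$ after the $\tau$-projection.

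The algebraicity rests on one observation: being a locally constant homomorphism into the discrete group $E^{\times}$, the map $u$ has open kernel, hence is trivial on the identity component of $\mathbb{A}^{\times}_{k}$ and in particular on $k^{\times 0}_{v}$ for every infinite place $v$. Thus for $x_{v}\in k^{\times 0}_{v}$ the value $\chi_{\tau}(x_{v})$ reduces to the $\tau$-projection of $\overline{\mathrm{Nm}_{k/E}(x_{v})}/\mathrm{Nm}_{k/E}(x_{v})$. Computing the archimedean part of the norm through the embeddings $\sigma:k_{v}\to\mathbb{C}$ restricting to $\tau$ or $\overline{\tau}$ on $E$, and using that the CM involution on $E$ satisfies $\tau(\overline{y})=\overline{\tau(y)}$ under every embedding, I would obtain an expression $\prod_{\sigma}\sigma(x_{v})^{-a_{\sigma}}$ with each $a_{\sigma}\in\{-1,0,1\}$. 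This is exactly an integral infinity type, so $\chi_{\tau}$ is algebraic.

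For (ii) I would first rewrite condition (a). By the Proposition relating the unramifiedness of $\rho_{l}$ to that of $\rho_{l}^{ab}$, together with the equality $\mathrm{art}_{k}(\mathscr{O}^{\times}_{\mathfrak{p}})=I^{ab}_{\mathfrak{p}}$ from Theorem~\ref{class 2}(iii), condition (a) is equivalent to $\rho^{ab}(\mathrm{art}_{k}(t))=\mathrm{id}$ for all $t\in\mathscr{O}^{\times}_{\mathfrak{p}}$, read off in the $l$-component. Since $(l,\mathfrak{p})=1$, Proposition~\ref{lcomp} makes the $l$-component of $\overline{\mathrm{Nm}_{k/E}(t)}/\mathrm{Nm}_{k/E}(t)$ trivial, so by the defining relation of $u$ this action is just multiplication by the global element $u(t)\in E^{\times}$ on $T_{l}(X_{\overline{k}})\otimes\mathbb{Q}_{l}$. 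As $T(X_{\mathbb{C}})_{\mathbb{Q}}$ is free of rank one over $E$, the induced $E\otimes\mathbb{Q}_{l}$-action is faithful, so (a) is equivalent to $u(t)=1$ for all $t\in\mathscr{O}^{\times}_{\mathfrak{p}}$. On the other hand, for such finite $t$ the idele $\mathrm{Nm}_{k/E}(t)$ has trivial archimedean part, whence $\chi_{\tau}(t)=\tau(u(t))$, so (b) is equivalent to $\tau(u(t))=1$ for all $t\in\mathscr{O}^{\times}_{\mathfrak{p}}$. Because $\tau:E\to\mathbb{C}$ is injective, $u(t)=1$ if and only if $\tau(u(t))=1$, and (a)$\Leftrightarrow$(b) follows.

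I expect the one genuinely delicate step to be the archimedean bookkeeping in (i): one must resolve the archimedean components of $\mathrm{Nm}_{k/E}$ place by place, keep track of which embeddings $k\to\mathbb{C}$ restrict to $\tau$ as opposed to $\overline{\tau}$, and invoke the CM-compatibility $\tau(\overline{y})=\overline{\tau(y)}$ to guarantee that the exponents come out as integers. Everything else is formal, resting on the two injectivity statements $E^{\times}\hookrightarrow\mathbb{A}^{\times}_{E,f}$ and $\tau:E\hookrightarrow\mathbb{C}$, the faithfulness of the rank-one $E$-action, and the inertia-versus-local-units dictionary supplied by class field theory.
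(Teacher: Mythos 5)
Your proof is correct, and for the core of the argument it follows the same route as the paper: triviality on $k^{\times}$ comes from $k^{\times}\subset\ker(\mathrm{art}_{k})$ plus the defining relation of $u$ in Proposition~\ref{hecke1}, and the equivalence in (ii) comes from the dictionary $\mathrm{art}_{k}(\mathscr{O}^{\times}_{\mathfrak{p}})=I^{ab}_{\mathfrak{p}}$ of Theorem~\ref{class 2}(iii) combined with Proposition~\ref{lcomp}, which kills the norm factor in the $l$-component and in the $\tau$-component respectively, reducing both (a) and (b) to the single condition $u(t)=1$. Where you differ is in completeness, and the differences are to your credit. First, the paper's proof of (i) only verifies the Hecke-character property and never checks algebraicity, even though the statement asserts it; your observation that $u$, being locally constant with values in the discrete group $E^{\times}$, dies on $k^{\times 0}_{v}$, so that the infinity type of $\chi_{\tau}$ is exactly that of $\overline{\mathrm{Nm}_{k/E}}/\mathrm{Nm}_{k/E}$ with exponents in $\{-1,0,1\}$, supplies precisely the missing verification (note that since $k\supset E$ and $E$ is totally imaginary, all infinite places of $k$ are complex, which keeps your bookkeeping clean). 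Second, you make explicit two steps the paper uses silently: that the relation $u(a)\overline{N_{k/E}(a)}/N_{k/E}(a)=1$, which a priori holds only in $\mathbb{A}^{\times}_{E,f}$, holds in $E^{\times}$ itself because all factors are principal and $E^{\times}\hookrightarrow\mathbb{A}^{\times}_{E,f}$ is injective (needed to conclude anything about the archimedean $\tau$-component), and that "$u(t)$ acts trivially on $T_{l}(X_{\overline{k}})\otimes\mathbb{Q}_{l}$ implies $u(t)=1$" rests on the faithfulness of the rank-one $E\otimes\mathbb{Q}_{l}$-action coming from $\dim_{E}T(X_{\mathbb{C}})_{\mathbb{Q}}=1$. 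Both points are genuine gaps in the paper's exposition that your write-up closes.
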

    \begin{proof}
(i): Let $t\in k^{\times}$. Since $k^{\times}$ is contained in the kernel of the Artin map by Theorem \ref{class 2}(i), we have ${\rm art}_{k}(t)=id_{k^{ab}}$. Since $t\in k$, we have  ${\rm Nm}_{k/E}(t)=N_{k/E}(t)$. Hence we obtain 
        \begin{align}
            \rho(id_{\overline{k}})=u(t)\cdot \overline{N_{k/E}(t)}N_{k/E}(t)^{-1}.\notag
        \end{align}
         Thus we have $\chi_{\tau}(t)=1$. This shows that $\chi_{\tau}$ is a Hecke character.
\\ (ii) By the Artin map, $\mathscr{O}^{\times}_{\mathfrak{p}}$ corresponds to $I_{\mathfrak{p}}$ (Theorem \ref{class 2}(iii)). Let $t\in {\rm Im}( \mathscr{O}^{\times}_{\mathfrak{p}}\to \mathbb{A}^{\times}_{k})$.
      \\ $(a)\Rightarrow (b) : $ Suppose that $\rho_{l}$ is unramified at $\mathfrak{p}$. By Theorem $\ref{risov}$, we obtain  
      \begin{align}
          \rho^{ab}({\rm art}_{k}(t))=u(t)\cdot \overline{{\rm Nm}_{k/E}(t)}{\rm Nm}_{k/E}(t)^{-1}.\notag
      \end{align}
      By assumption, $\rho^{ab}({\rm art}_{k}(t))$ acts trivially on the $l$-adic transcendental lattice $T_{l}(X_{\overline{k}}).$ On the other hand, by Proposition $\ref{lcomp}$, $\overline{{\rm Nm}_{k/E}(t)}/{\rm Nm}_{k/E}(t)$ also acts trivially. Hence $u(t)=1.$ By the definition of $t$, we have 
      \begin{align}\notag(\overline{{\rm Nm}_{k/E}(t)}/{\rm Nm}_{k/E}(t))_{\tau}=1,
      \end{align}
      and hence $\chi_{\tau}(t)=1.$
      \\ $(b)\Rightarrow (a)$ 
      Conversely, suppose that $\chi_{\tau}(t)=1.$ Then  $u(t)=1$. Let $x\in I_{\mathfrak{p}}$ be the image of $t$ under the Artin map. It suffices to show that $\rho_{l}(x)$ acts trivially on $T_{l}(X_{\overline{k}})$. This follows  from Proposition $\ref{lcomp}$.
    \end{proof}
    \begin{prop}\label{eigen}Let $\mathfrak{p}$ be a prime ideal with $(l, \mathfrak{p})=1$, and let $\pi_{\mathfrak{p}}\in \mathscr{O}_{\frak{p}}$ be a uniformizer. Suppose that $\rho_{l}$ is unramified at $\mathfrak{p}$. Then the action of $u(\pi_{\mathfrak{p}}^{-1})$ on $T_{l}(X_{\overline{k}})$ coincides with the Frobenius action $\rho_{l}({\rm Frob}_{\mathfrak{p}}).$ 
  \end{prop}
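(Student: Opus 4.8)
The plan is to chain together three of the facts already established: the class-field-theoretic description of the Frobenius (Theorem \ref{class 2}(iii)), Rizov's formula for $\rho^{ab}$ in the form of Proposition \ref{hecke1}, and the local triviality computation of Proposition \ref{lcomp}. Throughout I would view $\pi_{\mathfrak{p}}^{-1}$ as the idele of $\mathbb{A}^{\times}_{k}$ with entry $\pi_{\mathfrak{p}}^{-1}$ at the place $v$ attached to $\mathfrak{p}$ and $1$ at all other places, and regard elements of $G(\mathbb{A}_{f})\subset \mathbb{A}^{\times}_{E,f}=GL(T(X_{\mathbb{C}}))(\mathbb{A}_{f})$ as acting on $T(X_{\mathbb{C}})\otimes \mathbb{A}_{f}\cong (\prod_{l}T_{l}(X_{\overline{k}}))\otimes \mathbb{Q}$, so that the action on $T_{l}(X_{\overline{k}})$ is read off from the $l$-component $(-)_{l}$.

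First I would identify the Frobenius action. Since $\rho_{l}$ is unramified at $\mathfrak{p}$ and has abelian image (lying in $G(\mathbb{A}_{f})$), it factors through the abelian representation $\rho^{ab}$ on ${\rm Gal}(k^{ab}/k)$ of Theorem \ref{risov}, and $\rho_{l}({\rm Frob}_{\mathfrak{p}})$ is computed by evaluating $\rho^{ab}$ at the image of the geometric Frobenius in ${\rm Gal}(k^{ab}/k)$. By Theorem \ref{class 2}(iii) that image is ${\rm art}_{k}(\pi_{\mathfrak{p}})^{-1}={\rm art}_{k}(\pi_{\mathfrak{p}}^{-1})$, the last equality because ${\rm art}_{k}$ is a homomorphism. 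Hence on $T_{l}(X_{\overline{k}})$ we have $\rho_{l}({\rm Frob}_{\mathfrak{p}})=\bigl(\rho^{ab}({\rm art}_{k}(\pi_{\mathfrak{p}}^{-1}))\bigr)_{l}$.

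Next I would apply Proposition \ref{hecke1} to $t=\pi_{\mathfrak{p}}^{-1}$, which gives
\[
\rho^{ab}({\rm art}_{k}(\pi_{\mathfrak{p}}^{-1}))=u(\pi_{\mathfrak{p}}^{-1})\cdot \overline{s}/s,\qquad s:={\rm Nm}_{k/E}(\pi_{\mathfrak{p}}^{-1}).
\]
It then remains to check that $\overline{s}/s$ acts trivially on $T_{l}(X_{\overline{k}})$, i.e. that $(\overline{s}/s)_{l}=1$. This is exactly the computation in the proof of Proposition \ref{lcomp}: for a place $w\mid l$ of $E$ the $w$-component of $s$ is $\prod_{v'\mid w}N_{k_{v'}/E_{w}}((\pi_{\mathfrak{p}}^{-1})_{v'})$, and since $\pi_{\mathfrak{p}}^{-1}$ is supported only at $v$ while $(l,\mathfrak{p})=1$ forces $v\nmid w$, every factor equals $1$; thus $s_{w}=1$, and likewise $\overline{s}_{w}=1$, for all $w\mid l$. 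Taking $l$-components in the displayed identity then yields $\rho_{l}({\rm Frob}_{\mathfrak{p}})=\bigl(u(\pi_{\mathfrak{p}}^{-1})\bigr)_{l}$, which is precisely the action of $u(\pi_{\mathfrak{p}}^{-1})\in E^{\times}$ on $T_{l}(X_{\overline{k}})$ through its $E$-module structure.

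The points requiring care are bookkeeping rather than a genuine obstacle. One must track the geometric-Frobenius normalization and the resulting inverse, which is what produces $u(\pi_{\mathfrak{p}}^{-1})$ rather than $u(\pi_{\mathfrak{p}})$. Moreover, since $\pi_{\mathfrak{p}}^{-1}$ is a uniformizer rather than a unit, Proposition \ref{lcomp} cannot be cited verbatim; its proof, however, uses only that the idele is concentrated at $v$ with $v\nmid l$, so the same one-line argument applies. I therefore expect no substantial difficulty beyond assembling these three inputs correctly.
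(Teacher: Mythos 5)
Your proof is correct and follows essentially the same route as the paper: identify $\rho_{l}({\rm Frob}_{\mathfrak{p}})$ with $\rho^{ab}({\rm art}_{k}(\pi_{\mathfrak{p}}^{-1}))$ via Theorem \ref{class 2}(iii), apply the formula of Proposition \ref{hecke1} at $t=\pi_{\mathfrak{p}}^{-1}$, and eliminate the norm-ratio factor by taking $l$-components. You are in fact slightly more careful than the paper, which performs the $l$-component step without comment, whereas you correctly note that Proposition \ref{lcomp} as stated covers only unit ideles and must be re-run (verbatim, as you observe) for the uniformizer idele supported at $v\nmid l$.
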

  \begin{proof}
      The Artin map ${\rm art}_{k}$ sends $\pi_{\mathfrak{p}}^{-1}$ to ${\rm Frob}_{\mathfrak{p}}$. Hence we have 
      \begin{align}
          \rho({\rm Frob}_{\mathfrak{p}})=u(\pi_{\mathfrak{p}}^{-1})\overline{{\rm Nm}_{k/E}(\pi_{\mathfrak{p}}^{-1})}/{\rm Nm}_{k/E}(\pi_{\mathfrak{p}}^{-1}).\notag
      \end{align}
      Taking the $l$-component, we obtain 
      \begin{align}
          \rho_{l}({\rm Frob}_{\mathfrak{p}})=u(\pi_{\mathfrak{p}}^{-1})\notag.
      \end{align}
  \end{proof}
  \begin{cor}\label{eigen2}
  Suppose that $(l, \mathfrak{p})=1$ and that $\rho_{l}$ is unramified at $\mathfrak{p}$. Then the eigenvalues of ${\rm Frob}_{\mathfrak{p}}$ are given by
      \begin{align}
      \{\chi_{\tau}(\pi_{\mathfrak{p}}^{-1})~ |~ \tau : E\to \mathbb{C}~~~inclusions~~~\}.\notag
      \end{align}
      In particular, the Galois representation $\rho_{l}$ is rational.
  \end{cor}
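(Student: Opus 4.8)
The plan is to combine Proposition \ref{eigen} with the $E$-module structure of the transcendental lattice, thereby reducing the computation of Frobenius eigenvalues to the computation of eigenvalues of a multiplication operator on a one-dimensional $E$-vector space. By Proposition \ref{eigen}, under the standing hypotheses $(l,\mathfrak{p})=1$ and $\rho_l$ unramified at $\mathfrak{p}$, the operator $\rho_l({\rm Frob}_{\mathfrak{p}})$ on $T_l(X_{\overline{k}})\otimes\mathbb{Q}_l$ coincides with the action of the global element $u(\pi_{\mathfrak{p}}^{-1})\in E^\times$. So the whole statement reduces to understanding how an element of $E$ acts.

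First I would invoke condition (ii) in the definition of CM, namely ${\rm dim}_{E}T(X_{\mathbb{C}})_{\mathbb{Q}}=1$, to identify $T(X_{\mathbb{C}})_{\mathbb{Q}}$ with $E$ as an $E$-module, so that every $a\in E$ acts by multiplication. Extending scalars along the fixed isomorphism $\iota:\overline{\mathbb{Q}_l}\cong\mathbb{C}$ gives $T(X_{\mathbb{C}})_{\mathbb{Q}}\otimes_{\mathbb{Q}}\mathbb{C}\cong E\otimes_{\mathbb{Q}}\mathbb{C}\cong\prod_{\tau\in{\rm Hom}(E,\mathbb{C})}\mathbb{C}$, under which multiplication by $a$ becomes the diagonal operator whose $\tau$-entry is $\tau(a)$. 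Applying this with $a=u(\pi_{\mathfrak{p}}^{-1})$ shows that the eigenvalues of ${\rm Frob}_{\mathfrak{p}}$ are exactly $\{\tau(u(\pi_{\mathfrak{p}}^{-1}))\mid\tau\in{\rm Hom}(E,\mathbb{C})\}$.

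It then remains to identify $\tau(u(\pi_{\mathfrak{p}}^{-1}))$ with $\chi_{\tau}(\pi_{\mathfrak{p}}^{-1})$. The key observation is that $\pi_{\mathfrak{p}}^{-1}$ is the idele supported at the finite place $\mathfrak{p}$, so ${\rm Nm}_{k/E}(\pi_{\mathfrak{p}}^{-1})\in\mathbb{A}^{\times}_{E,f}$ is a finite idele; consequently the archimedean $\tau$-component of $\overline{{\rm Nm}_{k/E}(\pi_{\mathfrak{p}}^{-1})}/{\rm Nm}_{k/E}(\pi_{\mathfrak{p}}^{-1})$ is trivial. Since $u(\pi_{\mathfrak{p}}^{-1})$ is a global element whose $\tau$-component is $\tau(u(\pi_{\mathfrak{p}}^{-1}))$, the $\tau$-projection of the product $u(\pi_{\mathfrak{p}}^{-1})\cdot\overline{{\rm Nm}_{k/E}(\pi_{\mathfrak{p}}^{-1})}/{\rm Nm}_{k/E}(\pi_{\mathfrak{p}}^{-1})$ collapses to $\tau(u(\pi_{\mathfrak{p}}^{-1}))$, which is by definition $\chi_{\tau}(\pi_{\mathfrak{p}}^{-1})$. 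This proves the first assertion.

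For rationality I would take $S$ to be the finite set consisting of the places dividing $l$ together with all places at which some $\chi_{\tau}$ ramifies; each $\chi_{\tau}$ is an algebraic Hecke character, hence unramified at almost all places, so $S$ is finite. The equivalence (a)$\Leftrightarrow$(b) from the preceding theorem then shows $\rho_l$ is unramified at every $v\notin S$. Finally, for such $v$ the characteristic polynomial of ${\rm Frob}_{\mathfrak{p}}$ equals $\prod_{\tau}\bigl(T-\tau(u(\pi_{\mathfrak{p}}^{-1}))\bigr)$, which is precisely the characteristic polynomial of multiplication by $u(\pi_{\mathfrak{p}}^{-1})\in E$ on the $\mathbb{Q}$-vector space $E$, and hence has coefficients in $\mathbb{Q}$. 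Both conditions in the definition of a rational representation are thus verified. The only delicate point is the adelic bookkeeping in the third paragraph: one must carefully separate archimedean from finite components and check that the $\tau$-projection genuinely annihilates the norm factor, leaving only $\tau(u(\pi_{\mathfrak{p}}^{-1}))$.
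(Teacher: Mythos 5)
Your proof is correct and follows essentially the same route as the paper's: reduce to the action of $u(\pi_{\mathfrak{p}}^{-1})\in E^{\times}$ via Proposition \ref{eigen}, use the one-dimensionality of $T(X_{\mathbb{C}})_{\mathbb{Q}}$ over $E$ to see that the eigenvalues are the conjugates $\tau(u(\pi_{\mathfrak{p}}^{-1}))$, and use triviality of the archimedean components of $\overline{{\rm Nm}_{k/E}(\pi_{\mathfrak{p}}^{-1})}/{\rm Nm}_{k/E}(\pi_{\mathfrak{p}}^{-1})$ to identify these with $\chi_{\tau}(\pi_{\mathfrak{p}}^{-1})$. Your only addition is that you spell out the rationality claim (choice of the finite set $S$ via the equivalence (a)$\Leftrightarrow$(b), and the characteristic polynomial being that of multiplication by $u(\pi_{\mathfrak{p}}^{-1})$ on $E$ over $\mathbb{Q}$), which the paper leaves implicit.
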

  \begin{proof}
      Since $X$ has CM over $k$, we have 
      \begin{align}
    {\rm rank}_{\mathbb{Z}}T(X_{\mathbb{C}})=[E:\mathbb{Q}]. \notag
      \end{align}
      Hence the characteristic polynomial of ${\rm Frob}_{\mathfrak{p}}$ has  degree $[E : \mathbb{Q}].$ By Proposition  $\ref{eigen}$, the set 
      \begin{align}
          \{u(\pi_{\frak{p}}^{-1})^{\tau}~|~\tau : E\to \mathbb{C} ~~~inclusions~~~\}\notag
      \end{align}
      gives all  the eigenvalues of ${\rm Frob}_{\mathfrak{p}}.$ On the other hand, we have 
      \begin{align}(\overline{{\rm Nm}_{k/E}(\pi_{\mathfrak{p}}^{-1}})/{\rm Nm}_{k/E}(\pi_{\mathfrak{p}}^{-1}))_{\tau}=1.\notag
      \end{align}
      Hence this set coincides with 
      \begin{align}
          \{ \chi_{\tau}(\pi_{\mathfrak{p}}^{-1})~|~ \tau:E\to \mathbb{C}~~~\}\notag.
      \end{align}
  \end{proof}
  \begin{rmk}
      The rationality is due to Dwork \cite{dw}. However, we prove this  for CM K3 surfaces without Dwork's result.
  \end{rmk}
  \begin{cor}\label{semisimpli}For each $\tau : E \to \mathbb{C}$, let $\rho_{l, \tau}$ be the one-dimensional $l$-adic representation associated with the Hecke character $\chi_{\tau}$.  Then we have
      \begin{align}
          \rho_{l}\cong \bigoplus_{\tau : E \to \mathbb{C}}\rho_{l, \tau}.\notag
      \end{align}
  \end{cor}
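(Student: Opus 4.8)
The plan is to identify the two semisimple representations by the classical recipe: show they are unramified outside a common finite set of places, check that their Frobenius characteristic polynomials agree at every place outside that set, and then conclude by the Chebotarev density theorem together with the Brauer--Nesbitt principle that, in characteristic zero, a semisimple representation is determined up to isomorphism by its trace. Throughout I work over $\overline{\mathbb{Q}_{l}}$ via the fixed isomorphism $\iota : \overline{\mathbb{Q}_{l}} \cong \mathbb{C}$, and I abbreviate $\sigma := \bigoplus_{\tau : E \to \mathbb{C}} \rho_{l, \tau}$.

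First I would verify the formal hypotheses. The representation $\rho_{l}^{ss}$ is semisimple by construction, and $\sigma$ is semisimple because each $\rho_{l, \tau}$ is one dimensional, hence irreducible. Both factor through $\mathrm{Gal}(k^{ab}/k)$: the representation $\rho_{l}$ is abelian by Theorem \ref{risov}, and each $\rho_{l, \tau}$ is abelian by Theorem \ref{1gal}. Choosing a finite set $S \subset \Sigma_{k}$ that contains every place dividing $l$, every place where $\rho_{l}$ ramifies, and the (finite) conductors of all the $\chi_{\tau}$, both $\rho_{l}^{ss}$ and $\sigma$ are unramified outside $S$; by part (ii) of the theorem just proved these ramification loci in fact agree.

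The key step is the comparison of Frobenius characteristic polynomials for $\mathfrak{p} \notin S$. Since semisimplification leaves the characteristic polynomial of $\rho_{l}(\mathrm{Frob}_{\mathfrak{p}})$ unchanged, Corollary \ref{eigen2} shows that the eigenvalues of $\rho_{l}^{ss}(\mathrm{Frob}_{\mathfrak{p}})$ form the multiset $\{\chi_{\tau}(\pi_{\mathfrak{p}}^{-1})\}_{\tau}$, of cardinality $[E : \mathbb{Q}] = \mathrm{rank}_{\mathbb{Z}} T(X_{\mathbb{C}}) = \dim \rho_{l}$; here the multiplicities are accounted for by the fact that $\mathrm{Frob}_{\mathfrak{p}}$ acts on the one dimensional $E$-vector space $T(X_{\mathbb{C}})_{\mathbb{Q}}$ by multiplication by $u(\pi_{\mathfrak{p}}^{-1})$ (Proposition \ref{eigen}), whose eigenvalues on $T(X_{\mathbb{C}})_{\mathbb{Q}} \otimes \overline{\mathbb{Q}_{l}}$ are exactly the conjugates $\tau(u(\pi_{\mathfrak{p}}^{-1}))$. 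On the other hand, combining Theorem \ref{1gal} with Theorem \ref{class 2}(iii), which identifies $\mathrm{art}_{k}(\pi_{\mathfrak{p}}^{-1})$ with $\mathrm{Frob}_{\mathfrak{p}}$, the single eigenvalue of $\rho_{l, \tau}(\mathrm{Frob}_{\mathfrak{p}})$ is $\iota^{-1}(\chi_{\tau}(\pi_{\mathfrak{p}}^{-1}))$. Hence, after transporting everything to $\overline{\mathbb{Q}_{l}}$ through $\iota$, the characteristic polynomials of $\rho_{l}^{ss}(\mathrm{Frob}_{\mathfrak{p}})$ and $\sigma(\mathrm{Frob}_{\mathfrak{p}})$ both equal $\prod_{\tau : E \to \mathbb{C}} (T - \chi_{\tau}(\pi_{\mathfrak{p}}^{-1}))$, and in particular their traces agree.

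Finally I would invoke the Chebotarev density theorem: the Frobenius elements $\mathrm{Frob}_{\mathfrak{p}}$, $\mathfrak{p} \notin S$, are dense in the quotient of $\mathrm{Gal}(k^{ab}/k)$ through which both representations factor, and the trace maps are continuous, so $\mathrm{tr}\, \rho_{l}^{ss} = \mathrm{tr}\, \sigma$ on all of $\mathrm{Gal}(\overline{k}/k)$. Brauer--Nesbitt then yields $\rho_{l}^{ss} \cong \sigma$ over $\overline{\mathbb{Q}_{l}}$, which is the assertion. I expect the only genuine subtlety to be bookkeeping rather than substance: one must confirm that the Frobenius eigenvalues occur with the correct multiplicities (this is precisely the content of Corollary \ref{eigen2}, via $[E:\mathbb{Q}] = \mathrm{rank}_{\mathbb{Z}} T(X_{\mathbb{C}})$) and keep careful track of the inversion and conjugacy normalizations in the dictionary between uniformizers, Artin symbols, and geometric Frobenii. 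The representation-theoretic core is otherwise entirely standard.
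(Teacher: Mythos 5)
Your proposal is correct and takes essentially the same approach as the paper: the paper's own proof is a one-line appeal to Corollary \ref{eigen2} and the Chebotarev density theorem, and your argument is exactly the standard expansion of that appeal (matching Frobenius characteristic polynomials at unramified places via Corollary \ref{eigen2} and the Artin-map normalization $\mathrm{art}_{k}(\pi_{\mathfrak{p}}^{-1})=\mathrm{Frob}_{\mathfrak{p}}$, then Chebotarev plus Brauer--Nesbitt). Your additional bookkeeping on eigenvalue multiplicities, via the action of $u(\pi_{\mathfrak{p}}^{-1})\in E^{\times}$ on the one-dimensional $E$-vector space $T(X_{\mathbb{C}})_{\mathbb{Q}}$, is consistent with the paper's conventions and fills in detail the paper leaves implicit.
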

  \begin{proof}
      This follows from Corollary $\ref{eigen2}$, Remark \ref{ssrho},  Chebotarev's density theorem.
  \end{proof}
  
      By Corollary $\ref{semisimpli}$, we obtain Theorem \ref{main theorem}.
      \section{Examples; Kummer Surfaces associated to simple CM abelian surfaces}We now specialize Theorem \ref{main theorem} to Kummer surfaces associated to simple CM abelian surfaces. Using the classical CM theory of abelian varieties, we compare the Hecke characters obtained from Theorem \ref{main theorem} with those arising from the transcendental part of the abelian surface.
      \\\\\indent Following Chai-Conrad-Oort \cite{cco}, we recall  the definition of CM abelian varieties. Let $K$ be a \textbf{CM algebra}, i.e., a finite product of CM fields.  If $K=\prod K_{i}$, then we have ${\rm Hom}(K, \mathbb{C})=\coprod_{i} {\rm Hom}(K_{i}, \mathbb{C})$. In particular,  when ${\rm dim}_{\mathbb{Q}}K=2g$, the set ${\rm Hom}(K, \mathbb{C})$ consists of exactly $2g$-elements. A \textbf{type} $\Phi$ of $K$ is a subset $\Phi\subset {\rm Hom}(K, \mathbb{C})$ such that $\Phi\sqcup \overline{\Phi}={\rm Hom}(K, \mathbb{C})$, where $\overline{(-)}$ denotes complex conjugation of $\mathbb{C}$. The pair $(K, \Phi)$ is called a \textbf{CM type}. If $(K, \Phi)$ is a CM type of $K$, then the $\mathbb{C}$-vector space $\bigoplus_{\sigma\in\Phi}\mathbb{C}$ has a canonical structure of a $K$-vector space via the diagonal map $K\ni x\mapsto (\sigma(x))_{\sigma\in\Phi}\in \bigoplus_{\sigma\in\Phi}\mathbb{C}$.
      \begin{dfn}(CM abelian varieties) 
          Let $K$ be a CM-algebra with ${\rm dim}_{\mathbb{Q}} K=g$, let $(K, \Phi)$ be a CM type, and  let $A$ be a $g$-dimensional abelian variety over a subfield $k\subset \mathbb{C}$. Suppose that an embedding $i: K\to {\rm End}(A)\otimes_{\mathbb{Z}}{\mathbb{Q}}$ is given, and   there is an isomorphism $T_{0}A\otimes_{k}\mathbb{C}\cong \bigoplus_{\sigma\in \Phi}\mathbb{C}$ as $K\otimes_{\mathbb{Q}}\mathbb{C}$-modules, where $T_{0}A$ is the tangent space of $A$ at 0. The pair $(A, i)$ is said to be  a \textbf{CM abelian variety} over $k$ of CM type $(K, \Phi)$.
      \end{dfn}
      If $(A, i)$ is a CM abelian variety over $k$ of type $(K, \Phi)$, then $k$ contains the reflex field 
      \begin{align}\notag K^{\ast}:=\mathbb{Q}(\sum_{\sigma\in \Phi}\sigma(x)| x\in K).
      \end{align}Indeed, by the definition of a CM abelian variety, there is a diagonal action $(\sigma(x))_{\sigma\in \Phi}$ on the $k$-vector space $T_{0}A$ for every $x\in K$. Hence the trace $\sum_{\sigma\in \Phi}\sigma(x)$ lies in $k$.    
      \begin{prop}
          Let $(A, i)$ be a CM abelian surface over $k\subset \mathbb{C}$ of CM type $(K, \Phi)$. Then the Kummer surface $X:={\rm Km}(A)$ has CM over $k$ by the reflex field $K^{\ast}\subset\mathbb{C}$. 
      \end{prop} 
      \begin{proof}Let $\Phi=\{\sigma_{1}, \sigma_{2}\}.$ By the construction of Kummer surfaces, there is a $\mathbb{Q}$-Hodge isometry (up to the double twist)
          \begin{align}
              H^{2}(X_{\mathbb{C}}, \mathbb{Q}(1))\cong H^{2}(A_{\mathbb{C}}, \mathbb{Q}(1))\oplus \bigoplus^{16}_{i=1}[E_{i}], \notag 
          \end{align}
          where $[E_{i}]$ are the cohomology classes of the exceptional divisors of the minimal resolution $X\to A/\langle-1\rangle$.
          In particular, we obtain a Hodge isometry (up to the double twist) $T(X_\mathbb{C})_{\mathbb{Q}}\cong T(A_{\mathbb{C}})_{\mathbb{Q}}$.  Thus it  suffices to prove that the reflex field of $T(A)_{\mathbb{Q}}$ is $K^{\ast}$.
          \\\indent Via the composition
          \begin{align}\label{action}
            K\xrightarrow{i} {\rm End}(A)\otimes \mathbb{Q}\to{\rm End}_{Hdg}T(A_{\mathbb{C}})_{\mathbb{Q}}\to {\rm End}_{\mathbb{C}} H^{2, 0}(A_{\mathbb{C}})=\mathbb{C},  
         \tag{4.1} \end{align}
the field $K$ acts on $H^{2, 0}(A_{\mathbb{C}})$. 
          For $x\in K$, the element  $i(x)$ acts on $H^{2, 0}(A)$ as $\sigma_{1}(x)\sigma_{2}(x)$. In fact, since $T_{0}(A_{\mathbb{C}})\cong T_{0}A\otimes_{k}\mathbb{C}\cong \bigoplus_{\sigma\in \Phi} \mathbb{C}$,  the eigenvalues of $i(x)$ on $T_{0}(A_{\mathbb{C}})$ are  $\sigma_{1}(x)$ and $\sigma_{2}(x)$. Hence the eigenvalue of $i(x)$ on $H^{2, 0}(A_{\mathbb{C}})=\wedge^{2}H^{1, 0}(A_{\mathbb{C}})$ is $\sigma_{1}(x)\sigma_2(x)$. Thus the image of the composition (\ref{action}) is $K^{'}:=\mathbb{Q}(\sigma_{1}(x)\sigma_{2}(x)|~x\in K)\subset \mathbb{C}$. On the other hand, $K^{'}$ coincides with the reflex field $K^{\ast}$. Indeed, we have the relations
          \begin{align}
              \sigma_{1}(x)+\sigma_{2}(x)= \sigma_{1}(x+1)\sigma_{2}(x+1)-\sigma_{1}(x)\sigma_{2}(x)-1,\notag 
              \\ 2\sigma_{1}(x)\sigma_{2}(x)=(\sigma_{1}(x)+\sigma_{2}(x))^{2}-(\sigma_{1}(x^{2})+\sigma_{2}(x^{2})),\notag 
          \end{align}
         which show that $K^{'}=K^{\ast}$. 
      \\\indent Let $E(T(A_{\mathbb{C}})_{\mathbb{Q}})$ denote the reflex field of $T(A_{\mathbb{C}})_{\mathbb{Q}}$. To show  $K^{\ast}=E(T(A)_{\mathbb{Q}})$, we consider the following three cases.
         \\\indent (i) If $A_{\mathbb{C}}$ is simple, then the ring ${\rm End}(A_{\mathbb{C}})_{\mathbb{Q}}$ is a 4-dimensional CM field (see \cite[1.3.6.4. Proposition]{cco}). Hence $K$ is a CM-field. By (\ref{action}), the degree $[K^{\ast}: \mathbb{Q}]$ is equal to 4. Since the Picard number of $A_{\mathbb{C}}$ is $2$ (see \cite[Lemma 3.3]{murty}), we have $[E(T(A_{\mathbb{C}})_{\mathbb{Q}}): \mathbb{Q}]={\rm dim}_{\mathbb{Q}}T(A_{\mathbb{C}})_{\mathbb{Q}}=4$, and hence $K^{\ast}=E(T(A)_{\mathbb{Q}})$. 
       \\\indent (ii) If $A_{\mathbb{C}}$ is isogenous to a product of elliptic curves $E_{1}$ and $ E_{2}$, and if  $E_{1}$ and $E_{2}$ are not isogenous, then we have a canonical isomorphism of rings
       \begin{align}\label{nonis}
           {\rm End}(E_{1})_{\mathbb{Q}}\times {\rm End}(E_{2})_{\mathbb{Q}}\xrightarrow{\sim} {\rm End}(A_{\mathbb{C}})_{\mathbb{Q}}. \tag{4.2}
       \end{align} given by  $(f_{1}, f_{2})\mapsto f_{1}\times f_{2}$. 
       In fact, let $\varphi(x, y)=(f_{1}(x, y), f_{2}(x, y))\in {\rm End}(A_{\mathbb{C}})_{\mathbb{Q}}.$  Then  
       \begin{align}
           f_{i}(x, y)=f_{i}(x, 0)+f_{i}(0, y). \notag 
       \end{align}
       In particular, $f_{1}(0, y)$ is a homomorphism $E_{2}\to E_{1}$. Since $E_{2}$ and $E_{1}$ are not isogenous, it follows that $f_{1}(0, y)=0$ for all $y\in E_{2}$. By the same argument,  $f_{2}(x, 0)=0$ for all $x\in E_{1}$.  Hence the morphism (\ref{nonis}) is an isomorphism. 
      \\\indent In particular, the CM algebra $K$ is of the form  $K_{1}\times K_{2}$, where $K_{i}$ are imaginary quadratic fields that are not isomorphic. Hence the reflex field $K^{\ast}$ is generated by two non-isomorphic imaginary quadratic fields, and thus $[K^{\ast}:\mathbb{Q}]=4$. On the other hand, since  ${\rm dim}_{\mathbb{Q}}T(A)_{\mathbb{Q}}=4$ (see \cite[Corollary 2.3]{hulek}), we obtain  $[E(T(A)_{\mathbb{Q}}):\mathbb{Q}]=4$.  Hence $K^{\ast}=E(T(A_{\mathbb{C}})_{\mathbb{Q}})$. 
       \\\indent (iii) Suppose that $A_{\mathbb{C}}$ is isogenous to a product of two elliptic curves $E_{1}$ and $E_{2}$ which are isogenous. Then $K$ is a product of two isomorphic imaginary quadratic fields. In particular, the reflex field $K^{\ast}$ is a 2-dimensional vector space over $\mathbb{Q}$. Since the Picard number of $A_{\mathbb{C}}$ is $4$ (see \cite[Corollary 2.6]{hulek}), we have $[E(T(A_{\mathbb{C}})_{\mathbb{Q}}): \mathbb{Q}]=2$. Thus  $K^{\ast}=E(T(A)_{\mathbb{Q}})$. 
      \end{proof}
  As a consequence of Milne's complex multiplication theory \cite[Theorem 11.2]{milne2} for abelian varieties,  we will prove the main theorem of complex multiplication for the transcendental parts of simple abelian surfaces of CM type (Proposition \ref{simplecm}).  
  \\\\\indent Let $(A, i)$ be a CM abelian surface over $\mathbb{C}$ of CM type $(K, \Phi)$. Via the embedding  $i: K\hookrightarrow {\rm End}(A)\otimes \mathbb{Q}$, we see that the multiplicative group $K^{\times}\subset GL(H^{1}(A, \mathbb{Q}))$ is a maximal torus. Hence the Mumford-Tate group of $T(A)$ is contained in $K^{\times}$. In particular, the Hodge structure $h: \mathbb{C}^{\times}\to GL(H^{1}(A, \mathbb{R}))$ factors through the torus $(K\otimes_{\mathbb{Q}}\mathbb{R})^{\times}$.  We denote by $\mu_{h}$ the composition \begin{align} \mathbb{C}^{\times}\ni t\mapsto (t, 1)\in \mathbb{C}^{\times}\times \mathbb{C}^{\times}=\mathbb{S}(\mathbb{C})\xrightarrow{h_{\mathbb{C}}} (K\otimes_{\mathbb{Q}}\mathbb{C})^{\times}=\prod_{\varphi\in \Phi}\mathbb{C}^{\times}\times \prod_{\varphi\in \overline{\Phi}}\mathbb{C}^{\times}\notag\end{align}  By the definition of $h$, we have $\mu_{h}(t)=(t^{-1}, t^{-1}, 1, 1)\in \prod_{\varphi\in \Phi}\mathbb{C}^{\times}\times \prod_{\varphi\in \overline{\Phi}}\mathbb{C}^{\times}$.
      Moreover, the minimal field of definition of $\mu_{h}$ is the reflex field $K^{\ast}$, since  $\sigma \Phi=\Phi$ for every $\sigma\in {\rm Aut}(\mathbb{C}/K^{\ast})$. 
      \begin{prop}\label{rh}Let $(A, i)$ be a CM abelian surface over $\mathbb{C}$ of type $(K, \Phi)$. Let $h:\mathbb{S}\to GL(H^{1}(A, \mathbb{R}))$ be the $\mathbb{Q}$-Hodge structure. The  following statements hold: 
      \\\indent (1) The homomorphism 
      \begin{align}
         r_{h}^{-1}: {\rm Res}_{K^{\ast}/\mathbb{Q}}(\mathbb{G}_{m})\xrightarrow{{\rm Res}_{K^{\ast}/\mathbb{Q}}(\mu^{-1}_h)}{\rm Res}_{K^{\ast}/\mathbb{Q}}({\rm Res}_{K/\mathbb{Q}}(\mathbb{G}_{m})_{K^{\ast}})\xrightarrow{{\rm Nm}_{K^{\ast}/\mathbb{Q}}}{\rm Res}_{K/\mathbb{Q}}(\mathbb{G}_{m}) \notag 
      \end{align}is equal to the  \textbf{reflex norm} $N_{\Phi}$ of CM-type $(K, \Phi)$ (see \cite[2.1.3.3. Definition]{cco}).
      \\\indent (2)  For any $x\in K^{\ast\times}\subset \mathbb{C}^{\times}$,   we have  \begin{align}
        (\wedge^{2}\mu_{h})\otimes \mu_{\mathbb{Q}(1)}(x)=\mu_{h^{2}}(x), \notag 
        \end{align}
        where $\wedge^{2}\mu_{h}$ is a cocharacter defined by $x\mapsto \mu_{h}(x)\wedge\mu_{h}(x)\in \wedge^{2}H^{1}(A, \mathbb{C})$, $\mu_{h^{2}}$ is the cocharacter of the $\mathbb{Q}$-Hodge structure $H^{2}(A, \mathbb{Q})(1)$, and $\mu_{\mathbb{Q}(1)}$ is the cocharacter corresponding to the Tate twist $\mathbb{Q}(1)$. 
      \end{prop}
      \begin{proof}
      (1) 
      This follows from \cite[2.1.3.4. Proposition]{cco}. 
      \\(2) This follows from the existence of a canonical $\mathbb{Q}$-Hodge isometry $H^{2}(A, \mathbb{Q})(1)\cong \wedge^{2}H^{1}(A, \mathbb{Q})\otimes \mathbb{Q}(1)$. 
          
      \end{proof}
      \begin{prop}\label{simplecm} Let $(A, i)$ be a CM simple abelian surface over $\mathbb{C}$ of type $(K, \Phi)$, let $K^{\ast}$ be the reflex field of $K$, and let $\sigma\in {\rm Aut}(\mathbb{C}/K^{\ast})$. Let $s\in\mathbb{A}^{\times}_{K^{\ast}}$ be an id\'ele such that ${\rm art}_{K^{\ast}}(s)=\sigma|_{K^{\ast ab}}$. Then there exists a $K$-linear isogeny $\alpha: A^{\sigma}\to A$ such that the following diagram commutes:
          \begin{align}
              \xymatrix{T(A)_{\mathbb{A}_{f}}\ar[r]^{\alpha^{\ast}}& T(A^{\sigma})_{\mathbb{A}_{f}}\\ T(A)_{\mathbb{A}_{f}},\ar[u]^{\overline{s_{f}}/s_{f}}\ar[ur]_{\sigma^{\ast}}} \notag 
          \end{align}
           where $s_{f}\in \mathbb{A}^{\times}_{K^{\ast}, f}$ is the finite part of $s$. 
      \end{prop}
      \begin{proof} For a prime $l$, let $T_{l}A$ be the $l$-adic Tate module,  let $T_{f}A:=\prod_{l: prime}T_{l}A$, and let $V_{f}A:=T_{f}A\otimes\mathbb{Q}$.
          By \cite[Theorem 11.2]{milne2}, there exists a $K$-linear isogeny $\beta:  A\to A^{\sigma}$ such that the following diagram commutes: \begin{align}\xymatrix{ V_{f}A\ar[r]^{\beta}&V_{f}A^{\sigma}\\ V_{f}A, \ar[ru]_{\sigma^{-1}}\ar[u]^{N_{\Phi}(s)}&}\notag 
          \end{align} where $\sigma^{-1}: A\to A^{\sigma}$ is the inverse of the conjugation $A^{\sigma}\to A$.  Taking  duals, we obtain the commutative diagram
          \[\xymatrix{H^{1}(A, \mathbb{A}_{f})\ar[d]_{N_{\Phi}(s)}& H^{1}(A^{\sigma}, \mathbb{A}_{f})\ar[l]_{\beta^{\ast}}\ar[dl]^{\sigma^{-1\ast}}\\ H^{1}(A, \mathbb{A}_{f}).&}\]Let $\alpha: A^{\sigma}\to A$ be the inverse isogeny of $\beta$. By Proposition \ref{rh} (1), the diagram 
           \[\xymatrix{H^{1}(A, \mathbb{A}_{f})\ar[r]^{\alpha^{\ast}}& H^{1}(A^{\sigma}, \mathbb{A}_{f})\\ H^{1}(A, \mathbb{A}_{f}).\ar[u]^{r_{h}(s)}\ar[ru]_{\sigma^{\ast}}&}\]
           is commutative. 
         By Proposition \ref{rh} (2),  we obtain the following commutative diagram:
          \begin{align}
              \xymatrix{H^{2}(A, \mathbb{A}_{f})(1) \ar[r]^{\alpha^{\ast}}& H^{2}(A^{\sigma}, \mathbb{A}_{f})(1)\\ H^{2}(A, \mathbb{A}_{f})(1),\ar[u]^{r_{h^{2}}(s)}\ar[ru]_{\sigma^{\ast}}&} \notag 
          \end{align}
          where $r_{h^{2}}$ is defined as the composition
          \begin{align}
               {\rm Res}_{K^{\ast}/\mathbb{Q}}(\mathbb{G}_{m})\xrightarrow{{\rm Res}_{K^{\ast}/\mathbb{Q}}(\mu_{h^{2}})}{\rm Res}_{K^{\ast}/\mathbb{Q}}({\rm Res}_{K/\mathbb{Q}}(\mathbb{G}_{m})_{K^{\ast}})\xrightarrow{{\rm Nm}_{K^{\ast}/\mathbb{Q}}}{\rm Res}_{K/\mathbb{Q}}(\mathbb{G}_{m}). \notag
          \end{align}
          Since $r_{h^{2}}(s)$ acts trivially on ${\rm NS}(A)_{\mathbb{Q}}$, it induces an  action on $T(A)_{\mathbb{A}_{f}}$ via $\overline{s_{f}}/s_{f}$. Hence we obtain the required diagram. 
      \end{proof}
      \begin{cor}
          Let $(A, i)$ be a simple CM  abelian surface over a subfield $k\subset\mathbb{C}$ of CM-type $(K, \Phi)$. Let $G$ be the Mumford-Tate group of $T(A_{\mathbb{C}})_{\mathbb{Q}}$. Then the Galois representation $\rho:{\rm Gal}(\overline{k}/k)\to GL(T(A_{\mathbb{C}})_{\mathbb{Q}})(\mathbb{A}_{f})$ factors through  a continuous homomorphism
          \begin{align}
              \rho^{ab}: {\rm Gal}(k^{ab}/k)\to G(\mathbb{A}_{f})
         \notag  \end{align}
         such that the following diagram commutes:
         \begin{align}
            \xymatrix{ \mathbb{A}^{\times}_{k}\ar[d]_{{\rm Nm}_{k/K^{\ast}}}\ar[rr]^{{\rm art}_{k}}\ar[d]&&{\rm Gal}(k^{ab}/k)\ar[r]^{\rho^{ab}}& G(\mathbb{A}_{f})\ar[d]
          \\ \mathbb{A}^{\times}_{K^{\ast}}\ar[rr]_{proj}&&\mathbb{A}^{\times}_{K^{\ast}, f}\ar[r]_{\overline{(-)}/(-)}&G(\mathbb{A}_{f})/G(\mathbb{Q}).} \notag
         \end{align} 
      \end{cor}
      \begin{proof}Our first step is to construct a continuous homomorphism $\rho: {\rm Gal}(k^{ab}/k)\to G(\mathbb{A}_{f})$. Let $\sigma\in{\rm Gal}(\overline{k}/k)$. Choose an automorphism $\sigma^{'}\in {\rm Aut}(\mathbb{C}/K^{\ast})$ such that  $\sigma^{'}|_{\overline{k}}=\sigma$. Let $s\in \mathbb{A}^{\times}_{k}$ be an id\'ele such that ${\rm art}_{k}(s)=\sigma|_{k^{ab}}$. Then we have ${\rm art}_{K^{\ast}}({\rm Nm}_{k/K^{\ast}}(s))=\sigma|_{K^{\ast ab}}$. By Proposition \ref{simplecm}, there exists a $K$-isogeny $\alpha:A_{\mathbb{C}}\to A_{\mathbb{C}}$ such that \begin{align}\label{simplerel}\rho(\sigma)=\overline{{\rm Nm}_{k/K^{\ast}}(s)}/{\rm Nm}_{k/K^{\ast}}(s)\cdot\alpha^{\ast}\in G(\mathbb{A}_{f}).\tag{4.3}\end{align} 
      Since the group $G(\mathbb{A}_{f})$ is commutative, the Galois representation $\rho$ is abelian; hence  it induces a continuous homomorphism $\rho^{ab}:{\rm Gal}(k^{ab}/k)\to G(\mathbb{A}_{f})$. Since $\alpha^{\ast}\in G(\mathbb{Q})$, it follows from (\ref{simplerel}) that the diagram is commutative. 
      \end{proof}
      The following corollary is proved along the same vein as Theorem \ref{main theorem}.
      \begin{cor}\label{simple cm1} Let $(A, i)$ be a CM simple abelian surface of type $(K, \Phi)$. Then there exists a unique locally constant homomorphism $u^{'}: \mathbb{A}^{\times}_{k}\to K^{\ast\times }$ satisfying the following properties:
          \\\indent (i) Fix an embedding $\tau: K^{\ast}\to \mathbb{C}$. The function
          \begin{align}
              \chi^{'}_{\tau}: \mathbb{A}^{\times}_{k}\xrightarrow{u^{'}(-)\cdot \overline{{\rm Nm}_{k/K^{\ast}}(-)}/{\rm Nm}_{k/K^{\ast}}(-)}\mathbb{A}^{\times}_{K^{\ast}} \xrightarrow{\tau-{\rm projection}}\mathbb{C}^{\times}\notag 
          \end{align}
          is an algebraic Hecke character.
          \\\indent (ii) Let 
          \begin{align}
              \rho_{l}: {\rm Gal}(\overline{k}/k)\to GL(T_{l}(A_{\overline{k}})) \notag 
          \end{align}
          be the $l$-adic representation. Then $\rho_{l}$ is diagonalized by the algebraic Hecke characters $\chi_{\tau}$, where $\tau$ runs over the set ${\rm Hom}(K^{\ast}, \mathbb{C})$.
      \end{cor}
      \begin{prop}\label{onegai} Let $(A, i)$ be a CM simple abelian surface of type $(K, \Phi)$. Let $X:={\rm Km}(A)$ be the Kummer surface, and let $\chi_{\tau}$ be the algebraic Hecke characters in Theorem \ref{main theorem} for $X$. 
          Then, for every embedding $\tau\in {\rm Hom}(K^{\ast}, \mathbb{C})$, we have  $\chi_{\tau}=\chi^{'}_{\tau}$. 
      \end{prop}
      \begin{proof}By the construction of Kummer surfaces, there is an isomorphism of ${\rm Gal}(\overline{k}/k)$-modules
      \begin{align}
          H^{2}(X_{\overline{k}}, \mathbb{Q}_{l})(1)\cong H^{2}(A_{\overline{k}}, \mathbb{Q}_{l})(1)\oplus V, \notag 
      \end{align}
      where $V$ is a 16-dimensional $\mathbb{Q}_{l}$-vector space and is generated by the cohomology classes of the exceptional divisors of the minimal resolution $X\to A/\langle-1\rangle$. In particular, this induces an isomorphism of ${\rm Gal}(\overline{k}/k)$-modules
          \begin{align}
              T(A_{\mathbb{C}})_{\mathbb{Q}_{l}}\cong T(X_{\mathbb{C}})_{\mathbb{Q}_{l}}.  \notag 
          \end{align}
          For any $\sigma\in{\rm Gal}(\overline{k}/k)$, let $s\in \mathbb{A}^{\times}_{k}$ be an id\'ele such that  $\sigma|_{k^{ab}}={\rm art}_{k}(s)$.  By Proposition \ref{hecke1}, the action of $\sigma$ on $T(X_{\mathbb{C}})_{\mathbb{Q}_{l}}$ is given by 
          \begin{align}u(t)\overline{{\rm Nm}_{k/K^{\ast}}(t)}/{\rm Nm}_{k/K^{\ast}}(t).\notag 
          \end{align}On the other hand, by  Proposition \ref{simple cm1}, the action of $\sigma$ on $T(A)_{\mathbb{Q}_{l}}$ is given by 
          \begin{align}
          u^{'}(t)\overline{{\rm Nm}_{k/K^{\ast}}(t)}/{\rm Nm}_{k/K^{\ast}}(t). \notag 
          \end{align}Hence we have
          \begin{align}
              \chi_{\tau}(t)=\left(u(t)\frac{\overline{{\rm Nm}_{k/K^{\ast}}(t)}}{{\rm Nm}_{k/K^{\ast}}(t)}\right)_{\tau}=\left(u^{'}(t)\frac{\overline{{\rm Nm}_{k/K^{\ast}}(t)}}{{\rm Nm}_{k/K^{\ast}}(t)}\right)_{\tau}=\chi^{'}_{\tau}(t).\notag 
          \end{align}
      \end{proof} 
    
\end{document}